\documentclass[10pt]{amsart}
\usepackage{enumerate,amsmath,amssymb,latexsym,
amsfonts, amsthm, amscd, mathabx, url, hyperref}

\theoremstyle{plain}

\newtheorem{Theorem}{Theorem}[section]
\newtheorem{Lemma}[Theorem]{Lemma}
\newtheorem{Corollary}[Theorem]{Corollary}

\newtheorem{Definition}[Theorem]{Definition}

\newtheorem{Fact}[Theorem]{Fact}
\newtheorem{Question}[Theorem]{Question}
\numberwithin{equation}{section}
\newtheorem*{Claim}{Claim}

\newtheorem*{Claim1}{Claim 1}
\newtheorem*{Claim2}{Claim 2}

\numberwithin{equation}{section}

\addtocounter{section}{0}

\begin{document}

\title[Dominating reals and ordinal theories]{Dominating reals and ordinal theories}
\author[Cayley Robinson]{Cayley Robinson}

\address{Department of Mathematics, University of Florida\\
Gainesville, FL 32611, USA}
\email{chrobinson@ufl.edu}

\subjclass{} \keywords{}

\begin{abstract}
\begin{center} Hechler forcing changes the ordinal theory. \end{center}
\end{abstract}

\maketitle

\section{Introduction}
Let \(\mathbb{P}\) be a forcing. Must \(\mathbb{P}\) change the theory (i.e. change the truth of some sentence)? Must \(\mathbb{P}\) change the ordinal theory (i.e. change the truth of some formula whose only parameters are ordinals)? The answers depend on \(\mathbb{P}\) and on the universe with which we are dealing.

Suppose \(\mathbb{P}\) is either Cohen forcing or random forcing. Then in \(L\), \(\mathbb{P}\) changes the theory. On the other hand, in a \(\mathbb{P}\)-generic extension of \(L\), \(\mathbb{P}\) doesn't change the ordinal theory. Similarly, Sacks forcing can be iterated along inverted \(\omega\) \cite{Groszek}, and in the resulting generic extension, Sacks forcing doesn't change the ordinal theory.

Suppose instead that \(\mathbb{P}\) adds a dominating real (i.e. a real \(d \in \omega^{\omega}\) such that for each \(r \in \omega^{\omega}\) in the ground model, we have \(d(n) \geq r(n)\) for all but finitely many \(n \in \omega\)). In this case, Greg Hjorth showed (Fact \ref{Hjorth} below) that \(\mathbb{P}\) cannot be iterated along inverted \(\omega\). Must \(\mathbb{P}\) change the theory?

Under certain assumptions, the answer is no. As observed by Golshani and Mitchell \cite{GolshaniMitchell}, Woodin \cite{FriedmanWelchWoodin} used large cardinals to construct a transitive model of ZFC, satisfying \(V = L[r]\) for a real \(r\), whose theory is unchanged by any forcing whose generic object is a real.

In Woodin's model, collapse forcing (which adds a dominating real) doesn't change the theory, but does change the ordinal theory, because it changes the truth of the statement ``\(\alpha\) is uncountable" for some \(\alpha \in ON\). Must every forcing that adds a dominating real change the ordinal theory? We give three partial answers.

\begin{Theorem} \label{onepointone}
Let \(A\) and \(B\) be sets of ordinals. Suppose \(L[A] \subseteq L[B]\) and \(L[B]\) has a real that dominates the reals of \(L[A]\). Then \(L[A]\) and \(L[B]\) have different ordinal theories.
\end{Theorem}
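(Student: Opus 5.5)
The plan is to argue by contradiction. Assume \(L[A]\) and \(L[B]\) have the same ordinal theory. From this I would build an infinite descending sequence of models \(L[B] \supseteq L[A] \supseteq L[C_1] \supseteq L[C_2] \supseteq \cdots\), each having a real that dominates the reals of the next. Fact \ref{Hjorth} (Hjorth's result that dominating reals cannot be added along inverted \(\omega\)) should then give the contradiction.

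The basic formula is \(\psi(x)\): ``\(x\) is a set of ordinals and there is a real dominating every real of \(L[x]\).'' Its only free variable is \(x\), so it uses no parameters. Hence the sentence \(\exists x\,\psi(x)\), and any sentence built from \(\psi\) together with relativizations to \(L[x]\), is an ordinal sentence. We have \(L[B] \models \psi(A)\). To transfer information downward I would introduce sentences \(D_k\): let \(D_0\) be ``true'', and let \(D_{k+1}\) say \(\exists x\,(\psi(x) \wedge L[x] \models D_k)\). An induction on \(k\) using the assumed agreement of theories shows that both models satisfy every \(D_k\). The step is: if \(L[A] \models D_k\), then \(A\) witnesses \(L[B] \models D_{k+1}\), and transferring gives \(L[A] \models D_{k+1}\). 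This only produces finite chains of arbitrary length, however, and Hjorth's fact needs a genuinely infinite chain.

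To get an infinite chain I would make the choice of witnesses canonical using ordinals. The ordinal theory is the right tool here, because ordinal parameters let us pin down a specific witness. Define \(\theta(M)\) to be the least ordinal \(\theta\) such that some \(x \subseteq \theta\) satisfies \(\psi(x)\) in \(M\) and \(L[x]\) again has the same ordinal theory restricted to parameters below \(\theta\) (or to a fixed \(\Sigma_n\) fragment, so that the condition is first order). Among the witnesses \(x \subseteq \theta\), take the \(<_{L[x]}\)-least code, or alternatively argue that the set of witnesses is ordinal definable in \(M\), so that a definable one can be chosen inside \(\mathrm{HOD}\)-style bookkeeping. The statement ``\(\theta(M) = \theta\)'' has the single parameter \(\theta\). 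Since \(L[A]\) and \(L[B]\) agree on it, we can pick \(C_1 \in L[A]\) with the same defining properties relative to \(L[A]\) that \(A\) has relative to \(L[B]\). Iterating then gives \(C_2 \in L[C_1]\), \(C_3 \in L[C_2]\), and so on. This yields \(L[C_{n+1}] \subseteq L[C_n]\), with \(L[C_n]\) having a real dominating the reals of \(L[C_{n+1}]\), for every \(n\).

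The main obstacle is the self-referential clause ``\(L[x]\) has the same ordinal theory as \(V\)''. By Tarski this is not first-order expressible, but it is exactly what is needed to keep the iteration going. The first thing I would try is to work with a fixed \(\Sigma_n\) fragment large enough to express \(\psi\) and the bounding of witnesses, and to use reflection to choose \(n\) uniformly. If that fails, I would aim to prove directly that the \(D_k\) are satisfied in a way that is uniform enough to pass to a limit, using the least-ordinal trick above to fuse the finite chains into one infinite chain. Finally, I would check that the resulting sequence meets the hypotheses of Fact \ref{Hjorth}.
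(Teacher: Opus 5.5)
Your sentences $D_k$ and the transfer step are correct: in $L[B]$ the set $A$ witnesses $D_{k+1}$, and you then pull this back to $L[A]$. These are the finite stages of the paper's argument. But the passage from arbitrarily long finite chains to an infinite one is the whole difficulty, and your proposal does not carry it out. The routes you suggest for it do not work.

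\textbf{The $\Sigma_n$ fragment does not close the loop.} To find $C_{k+2}$ you must transfer, from $L[C_k]$ to $L[C_{k+1}]$, the sentence ``some $x$ satisfies $\psi(x)$ and $L[x]$ agrees with $V$ on $\Sigma_n$ formulas with parameters below $\theta$''. That sentence is more complex than $\Sigma_n$, so $\Sigma_n$-agreement between $L[C_{k+1}]$ and $L[C_k]$ does not give it. Each step uses up quantifier complexity, and you are back to finite chains.

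\textbf{An infinite chain alone would not suffice.} A chain $L[C_0]\supseteq L[C_1]\supseteq\cdots$ with $d_n\in L[C_n]$ dominating $L[C_{n+1}]$ does not by itself contradict Fact~\ref{Hjorth}. That fact needs $d_m$ to dominate $L[\langle d_n:n>m\rangle]$, so you need $\langle d_n:n>m\rangle\in L[C_{m+1}]$. In other words, each tail of the chain must be produced by a recipe definable inside the corresponding model. A choice governed by agreement with an outside model is not such a recipe.

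\textbf{The missing idea.} Continue your $D_k$ through all the ordinals, so that an ordinal parameter does the job you wanted the self-referential clause to do.
\begin{enumerate}
\item Fix $\kappa$ with $A\cup B\subseteq\kappa$. In any model of the form $L[C]$, define decreasing sets $D_\alpha\subseteq\mathcal{P}(\kappa)$ by: $x\in D_\alpha$ iff some $y\in\mathcal{P}(\kappa)\cap L[x]$ lies in every $D_\beta$ with $\beta<\alpha$, and $L[x]$ has a real dominating $L[y]$. The paper does this with relations $R_\alpha$.
\item By induction, whether $x\in D_\alpha$ is decided correctly inside $L[x]$, so it does not depend on the ambient model. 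Membership is also upward closed: if $L[y]\subseteq L[x]$ and $y\in D_\alpha$, then $x\in D_\alpha$. And ``$D_\alpha=\emptyset$'' is a single formula with ordinal parameters $\alpha,\kappa$.
\item The $D_\alpha$ stabilize at some $D_\infty$. Every $x\in D_\infty$ has some $y\in D_\infty\cap L[x]$ dominated by a real of $L[x]$. If $D_\infty\neq\emptyset$, choose the $<_{L[x_n]}$-least pair $(x_{n+1},d_n)$ at each step. This gives $\langle x_n,d_n:n\ge m\rangle\in L[x_m]$ for every $m$, contradicting Fact~\ref{Hjorth}. So $D_\infty=\emptyset$. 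Hjorth's result is thus used inside a single model, with no agreement of theories needed.
\item Let $\rho$ be least with $D_\rho^{L[A]}=\emptyset$. For each $\beta<\rho$, some element of $L[A]$ lies in $D_\beta$, so $A\in D_\beta$ by upward closure. Since $A\in L[B]$ and $L[B]$ has a real dominating $L[A]$, this gives $B\in D_\rho^{L[B]}$.
\end{enumerate}
Hence the formula ``$D_\rho=\emptyset$'', whose only parameters are ordinals, separates $L[A]$ from $L[B]$.
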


In particular, in Woodin's model, the ordinal theory is changed by any forcing that adds a dominating real.

\begin{Theorem} \label{onepointtwo}
Suppose there is an extendible cardinal. Let \(\mathbb{P}\) be a forcing that adds a dominating real. Then \(\mathbb{P}\) changes the ordinal theory.
\end{Theorem}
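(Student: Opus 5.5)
The plan is to argue by contradiction: assuming that \(\mathbb{P}\) preserves the ordinal theory, build an iteration of \(\mathbb{P}\) along inverted \(\omega\), which Fact \ref{Hjorth} rules out for forcings adding a dominating real. The extendible cardinal will supply ground model definability in a strong, uniform form (Usuba: under an extendible, every ground is of the form \(W_r\) for a uniformly definable family indexed by sets \(r\), and the mantle is itself a ground). It will also let us turn the infinitary hypothesis ``same ordinal theory'' into single first-order statements about the grounds.

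Step 1 (making the hypothesis first-order). Fix a \(V\)-generic \(G\subseteq\mathbb{P}\). Suppose, toward a contradiction, that \(V\) and \(V[G]\) agree on every formula with ordinal parameters. Choose \(\lambda\) larger than \(|\mathbb{P}|\) such that \(V_\lambda\prec_{\Sigma_n} V\) for a suitably large \(n\), and likewise in \(V[G]\); extendibility will help choose \(\lambda\) uniformly. Then ``\(V_\lambda\) and \(V_\lambda[G]\) have the same ordinal theory'' is expressible by a single formula \(\theta(\lambda,\mathbb{P})\). We take \(\mathbb{P}\) itself to be ordinal definable, which is harmless by coding it with an ordinal. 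The formula \(\theta\) says that for every formula \(\varphi\) and every tuple \(\vec\alpha<\lambda\), \(\mathbb{P}\) decides \(\varphi(\vec\alpha)\) in \(V_\lambda\) in agreement with its truth there.

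Step 2 (pulling a ground downward). Consider the ordinal statement \(\psi\): ``there is a set \(r\) such that \(W_r\) is a ground, \(\mathbb{P}^{W_r}\) is defined by the same ordinal definition, \(V\) is a \(\mathbb{P}^{W_r}\)-generic extension of \(W_r\), and \(W_r\) satisfies \(\theta\).'' In \(V[G]\) the witness is \(W_r=V\), so \(\psi\) holds there. By the hypothesis it holds in \(V\). This gives a ground \(W_1\) with \(V=W_1[g_1]\), \(g_1\) \(\mathbb{P}\)-generic over \(W_1\), and \(W_1\) again satisfying \(\theta\). Since \(W_1\) and \(V\) share the ordinal theory below \(\lambda\), and \(\psi\) is among those statements, \(\psi\) holds in \(W_1\) as well. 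Repeating the argument yields \(W_2\subseteq W_1\) with \(W_1=W_2[g_2]\), and so on.

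Step 3 (the contradiction). These choices must be made coherently, so that the \(g_n\) assemble into an iteration of \(\mathbb{P}\) along inverted \(\omega\) in the sense of Fact \ref{Hjorth}. Each \(W_n\) is a \(\mathbb{P}\)-generic extension of \(W_{n+1}\) and hence contains a real dominating \(W_{n+1}\). This contradicts Fact \ref{Hjorth}. Alternatively, we can use Usuba's theorem that under an extendible the mantle \(\mathbb{M}\) is a ground: all \(W_n\) contain \(\mathbb{M}\) and are grounds of \(V=\mathbb{M}[h]\), so \(\bigcap_n W_n\) is sandwiched between \(\mathbb{M}\) and \(V\). The set-sized forcing \(h\) then bounds the length of any strictly descending chain of dominating-real extensions, since each step adds a real dominating the next ground.

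The main obstacle is Step 2 and its iteration. We must make sure that ``\(W_r\) satisfies \(\theta\)'' is really a statement with only ordinal parameters, which requires the uniform definability of grounds and the correct choice of \(\lambda\); this is where the extendible is used. We must also make sure that the choices of \(r\) at each stage can be made coherently (for instance, by taking the \(<\)-least witness in a definable well-order of the relevant \(V_\lambda\)), so that the limit object is a genuine inverted-\(\omega\) iteration. I would first try to get coherence by always choosing the \(r\) minimizing the ordinal rank of its code, so that the whole descending sequence is definable from \(\lambda\) alone.
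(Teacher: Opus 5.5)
There is a genuine gap in Step 3, and it is the core of the problem. Fact \ref{Hjorth} does not, by itself, rule out a descending chain $W_0\supseteq W_1\supseteq\cdots$ in which each $W_m$ merely contains a real $f_m$ dominating $W_{m+1}$. It rules such a chain out only when each tail $\langle f_n : n>m\rangle$ lies in a model that $f_m$ dominates, i.e.\ essentially in $W_{m+1}$. Your chain is built by successive choices made from outside, so the tails are available in $V$, not in $W_{m+1}$.

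Your proposed remedies do not provide this. Neither $V$ nor the $W_n$ need have a definable well-order of $V_\lambda$, since you are not assuming $V=\mathrm{HOD}$. ``The $r$ of least rank'' is not unique. And the generic dominating reals are typically not ordinal definable in $W_n$. What you need is a choice procedure that each $W_m$ can run for itself, using only parameters that lie in every $W_n$.

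Your ``alternative'' does not close the gap either. Being set-sized over the mantle does not bound descending chains of grounds: adding $\omega$ Cohen reals $\langle c_k\rangle$ over $\mathbb{M}$ gives the infinite strictly descending chain of grounds $\mathbb{M}[\langle c_k : k\ge n\rangle]$. So the dominating condition must be exploited, and showing that it forbids infinite chains is exactly what has to be proved.

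The paper handles this as follows. It writes $V=M[A]$ and $V[H]=M[B]$ with $M$ the mantle; the extendible is used only to make the mantle a ground. All relevant models then have the form $M[x]$ with $x\subseteq\kappa$. Each carries the well-order $K(\triangleleft,x)$ of Fact \ref{KTheorem}, definable from $x$ and one parameter $\triangleleft\in M$ shared by all of them. Taking $K(\triangleleft,x_n)$-least witnesses therefore puts each tail into $M[x_m]$.

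The paper then packages this as a rank. The relations $R_\alpha$ of Theorem \ref{MTheorem} satisfy $R_\infty=\emptyset$ by Fact \ref{Hjorth}. On the other hand, $M[A]\equiv_{ON}M[B]$ would give $B\in\mathrm{dom}(R_\alpha^{M[B]})$ for every $\alpha$ by induction. So the single formula ``$\mathrm{dom}(R_\alpha)\neq\emptyset$'', with ordinal parameters $\alpha,\kappa$, already separates $V$ from $V[H]$. No truth predicate, no reflection, and no downward iteration of $\mathbb{P}$ is ever needed.

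Steps 1 and 2 also have problems that the paper's route avoids.
\begin{itemize}
\item Assuming $\mathbb{P}$ is ordinal definable is not harmless. An arbitrary forcing can be coded by a set of ordinals, not by an ordinal. The theorem concerns arbitrary $\mathbb{P}$, and the paper never needs $\mathbb{P}$ to be definable or homogeneous.
\item Your $\theta$ asserts that $1_{\mathbb{P}}$ decides every relevant statement. That requires homogeneity, or the hypothesis applied to all generics at once.
\item To propagate $\psi$ from $V$ down to $W_1$, $W_2$, \dots{} you need $\lambda$ to reflect $\psi$ inside each $W_n$, which you have not arranged.
\end{itemize}
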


\begin{Theorem} \label{onepointthree}
Let \(\mathbb{P}\) be a definable cone homogeneous forcing that adds a dominating real. Then \(\mathbb{P}\) changes the ordinal theory.
\end{Theorem}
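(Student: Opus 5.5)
The plan is to argue by contradiction, showing that a definable cone homogeneous \(\mathbb{P}\) which preserves the ordinal theory would have to behave like a forcing that can be iterated along inverted \(\omega\), which Fact \ref{Hjorth} forbids for forcings adding a dominating real. I expect the final step, producing the inverted-\(\omega\) configuration, to be the main obstacle.

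\emph{Homogeneity step.} Assume \(\mathbb{P}\) is definable from ordinal parameters and cone homogeneous: any two conditions have extensions whose cones are isomorphic. The standard argument then shows that for every formula \(\varphi(\vec\alpha)\) with ordinal parameters, either \(1 \Vdash \varphi(\vec\alpha)\) or \(1 \Vdash \neg\varphi(\vec\alpha)\). Suppose toward a contradiction that \(\mathbb{P}\) does not change the ordinal theory. Then \(V \models \varphi(\vec\alpha)\) iff \(1 \Vdash_{\mathbb{P}} \varphi(\vec\alpha)\), for all such \(\varphi\). Two consequences follow:
\begin{enumerate}[(a)]
\item Every ordinal-parameter statement about \(\mathbb{P}\) itself transfers to \(V[G]\), since \(\mathbb{P}\) is definable. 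In particular \(V[G]\) satisfies ``\(\mathbb{P}\) is cone homogeneous, adds a dominating real, and does not change the ordinal theory.''
\item \(\mathrm{HOD}^{V}=\mathrm{HOD}^{V[G]}\). The inclusion \(\supseteq\) is the usual homogeneity argument. For \(\subseteq\), the statement ``\(\beta\) belongs to the \(\gamma\)-th ordinal-definable set of ordinals in the canonical well-order'' has only ordinal parameters, so it is absolute between \(V\) and \(V[G]\).
\end{enumerate}

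\emph{Iteration step.} Using (a), I would iterate \(\mathbb{P}\) \(\omega\) times, giving \(V \subseteq V[G_0] \subseteq V[G_0][G_1] \subseteq \cdots\). Each stage is again an extension by the same definable cone homogeneous forcing, adds a real dominating the previous model, and has the same ordinal theory as \(V\). The goal is to turn this forward chain into a chain indexed by inverted \(\omega\). Concretely, I want models \(M_0 \supseteq M_1 \supseteq M_2 \supseteq \cdots\) in which each \(M_n\) is a \(\mathbb{P}\)-generic extension of \(M_{n+1}\); this is exactly what Fact \ref{Hjorth} rules out.

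My first attempt would use the fact that the ordinal theory, together with (b), pins down \(\mathrm{HOD}\) and the forcing relation over it. Via a Vopěnka-style argument, each \(V[G_0]\cdots[G_n]\) would be a generic extension of \(\mathrm{HOD}\) by a homogeneous forcing. I would then try to realize, inside a single generic extension of \(\mathrm{HOD}\), a sequence of intermediate models all elementarily equivalent for ordinal formulas. These could be obtained by compactness or reflection on the ordinal-theory types, for instance by working in a countable elementary substructure and building the generics downward.

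\emph{Fallback.} If the inverted-\(\omega\) construction cannot be made to go through, I would look for a distinguishing ordinal-parameter formula directly, in the spirit of Theorem \ref{onepointone}. The candidate is: ``there is a real dominating all reals of \(\mathrm{HOD}\), relativized to successive OD-closures.'' Using (b), the generic dominating real witnesses this in \(V[G]\) one level beyond what holds in \(V\). The difficulty is making ``one level beyond'' expressible with ordinal parameters alone, since the ground model is not ordinal-definable in \(V[G]\).
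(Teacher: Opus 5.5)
Your homogeneity observations are sound (indeed \(V\equiv_{ON}V[G]\) alone gives \(HOD^V=HOD^{V[G]}\)). But neither your main route nor your fallback reaches a contradiction; both stop exactly where the content lies.

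In the main route, the assertion that each \(V[G_0]\cdots[G_n]\) is a set-generic extension of \(HOD\) is not a theorem. Vop\v{e}nka only makes \(HOD[X]\) generic over \(HOD\) for a set of ordinals \(X\). Beyond that, ``compactness or reflection on ordinal-theory types'' gives no mechanism for building a descending chain. The obstruction is structural: ``there is an inner model \(W\) of which I am a dominating extension and which has my ordinal theory'' is not first-order, since it quantifies over inner models and ``same ordinal theory'' is a scheme. So it cannot be propagated downward. Moreover, Fact \ref{Hjorth} only applies once each tail \(\langle f_n:n>m\rangle\) lies in a model dominated by \(f_m\), so the witnesses must be chosen definably.

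Your fallback names the right difficulty (expressing ``one level beyond'' with ordinal parameters) but leaves it open. Observation (b) cannot close it, because the single model \(HOD\) carries no notion of depth.

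The missing idea is to replace theories by an ordinal rank over a set-indexed, uniformly definable family of models. The paper fixes a singular strong limit \(\kappa\) with \(\text{cf}(\kappa)>\omega\) and uses the models \(HOD_x\) for \(x\in B(\kappa)\), the sets \(x\subseteq\kappa\) whose \(HOD_x\) is preserved by every \(\mathbb{P}^n\). It defines relations by recursion: \(xS_0y\) iff \(y\in HOD_x\); \(xS_{\alpha+1}y\) iff moreover \(y\in\text{dom}(S_\alpha)\) and some real of \(HOD_x\) dominates \(HOD_y\); intersections are taken at limits.

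Fact \ref{Hjorth} shows the limit relation is empty, because a descending chain picked via the canonical well-orders of the \(HOD_{x_n}\) has each tail in \(HOD_{x_m}\). Hence ``the least \(\alpha\) with \(S_\alpha=\emptyset\)'' is an ordinal-definable ordinal.

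In \(V[G]\), Fact \ref{Shelah} gives \(x\subseteq\kappa\) with \(\mathcal{P}(\kappa)\subseteq HOD_x\). So \(HOD_x^{V[G]}\) contains the dominating real and every \(y\in B(\kappa)^V\). Coding \(\langle\mathcal{P}(\lvert\mathbb{P}^n\rvert):n\in\omega\rangle\) into \(x\), Lemma \ref{PreservesLemma} (via Fact \ref{LaverWoodin}) yields \(x\in B(\kappa)^{V[G]}\). Membership in \(B(\kappa)\) gives \(HOD_y^V=HOD_y^{V[G]}\) for \(y\in B(\kappa)^V\), and the iterations \(\mathbb{P}^n\) give \(B(\kappa)^V\subseteq B(\kappa)^{V[G]}\). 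Ordinal-theory preservation cannot give you this, since \(y\) is not an ordinal parameter.

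If \(V\equiv_{ON}V[G]\), induction then puts \(x\) in every \(\text{dom}(S_\alpha^{V[G]})\), contradicting well-foundedness.
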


In particular, Hechler forcing changes the ordinal theory. Using the terminology of Hamkins and Löwe \cite{HamkinsLöwe}: in each model of ZFC, the modal logic of Hechler forcing with ordinal parameters is nontrivial.

The paper is organized as follows. Section 2 has many preliminaries, including a construction of \(M[x]\) and a proof of Hjorth's result. Section 3 covers a general theorem (Theorem \ref{MTheorem}) that has Theorems \ref{onepointone} and \ref{onepointtwo} (Corollaries \ref{LTheorem} and \ref{MantleTheorem}) as consequences. Section 4 covers Theorem \ref{onepointthree} (Theorem \ref{GenTheorem}). Section 5 states some open problems.

\section{Preliminaries}
The reader may want to skip this section, then refer to it later as needed. We mostly use notation from Kunen's textbook \cite{Kunen}.

\begin{Definition} Let \(V\) be an inner model of \(W\). \(V\) and \(W\) {\normalfont have the same ordinal theory} (written \(V \equiv_{ON} W\)) if they agree on all formulas whose only parameters are ordinals.
\end{Definition}

\begin{Definition} Let \(d \in \omega^{\omega}\). Let \(A\) be a class. If \(d\) dominates every real \(r \in \omega^{\omega} \cap A\), then we say that \(d\) {\normalfont dominates} \(A\).
\end{Definition}

\begin{Definition} A forcing \(\mathbb{P}\) is {\normalfont cone homogeneous} if for each \(p, q \in \mathbb{P}\), there exist \(p' \leq p\) and \(q' \leq q\) such that \(\mathbb{P}\restriction{p'}\) is isomorphic to \(\mathbb{P}\restriction{q'}\).
\end{Definition}

Lemma \ref{PreservesLemma} exploits the following property of cone homogeneous forcings: if \(\mathbb{P}\) is cone homogeneous, then for each formula \(\varphi(x_1, ..., x_n)\) and each \(a_1, ..., a_n\), we have either \(1_{\mathbb{P}} \Vdash \varphi(\check{a}_1, ..., \check{a}_n)\) or \(1_{\mathbb{P}} \Vdash \neg\varphi(\check{a}_1, ..., \check{a}_n)\).

\begin{Definition}
{\normalfont Hechler forcing} is the forcing \(\{\langle s, f \rangle : s \in \omega^{< \omega} \land f \in \omega^{\omega}\}\), where \(\langle s', f' \rangle \leq \langle s, f \rangle\) if and only if \(s' \supseteq s\) and \(\forall n \in \omega \thinspace [f'(n) \geq f(n)]\) and \(\forall n \in \text{dom}(s') \setminus \text{dom}(s) \thinspace [s'(n) \geq f(n)]\).
\end{Definition}

Hechler forcing is cone homogeneous, and adds a dominating real.

Part (7) of Fact \ref{MDef} uses the following result, which is II.4.27 in \cite{Kunen}.

\begin{Fact} \label{KunenTheorem}
Let \(M\) be a transitive class. Suppose \(\forall b \subseteq M \thinspace \exists c \in M \thinspace [b \subseteq c]\) and \(M \models \text{Comprehension}\). Then \(M \models \text{ZF}\).
\end{Fact}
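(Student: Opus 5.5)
The plan is to show that \(M\) satisfies each axiom of ZF by verifying the two instances that the hypotheses do not give us for free, namely Separation in full and the ``closure'' axioms Pairing, Union, Power Set, Replacement and Infinity. We take as given that \(M\) is transitive, so Extensionality and Foundation hold automatically by absoluteness of \(\in\) and \(\subseteq\) between \(M\) and \(V\). Comprehension is assumed outright. We will repeatedly use the following template: to produce a set \(y \in M\) with \(y = \{x \in M : \ldots\}\), first find the relevant subclass \(b \subseteq M\) in \(V\), use the covering hypothesis to get \(c \in M\) with \(b \subseteq c\), and then use Comprehension inside \(M\) to cut \(c\) down to \(b\) via a formula relativized to \(M\).

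Concretely, for Pairing, given \(x,y \in M\), we cover \(\{x,y\}\) by some \(c \in M\) and separate \(\{z \in c : z = x \lor z = y\}\). For Union, we cover \(\bigcup x\), which is a subset of \(M\) by transitivity, and separate \(\{z \in c : \exists w \in x\,(z \in w)\}\). For Power Set, we cover \(\mathcal{P}(x) \cap M\) and separate \(\{z \in c : z \subseteq x\}\). Since \(\subseteq\) is absolute for transitive \(M\), this is \(\mathcal{P}^M(x)\). For Infinity, note that \(\omega \subseteq M\): each natural number is in \(M\) by induction, using \(\emptyset\) obtained by separation from any element (or from a cover of \(\emptyset\)) and successors obtained from Pairing and Union. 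We then cover \(\omega\) by \(c\) and separate \(\{z \in c : z \text{ is a natural number}\}\), using the absoluteness of ``natural number'' for transitive classes.

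The main obstacle is Replacement, because the range of a class function defined in \(M\) need not a priori be a set of \(V\) that sits inside \(M\). Suppose \(\forall x \in A\, \exists! y\, \varphi^M(x,y,p)\) with \(A, p \in M\). The image \(b = \{y \in M : \exists x \in A\, \varphi^M(x,y,p)\}\) is a subset of \(M\) and is a set in \(V\) by Replacement in \(V\), since \(\varphi^M\) defines a function in \(V\). Here we use that \(M\) is a class, so \(\varphi^M\) is a formula of \(V\). We cover \(b\) by \(c \in M\) and then separate \(\{y \in c : \exists x \in A\, \varphi(x,y,p)\}^M\), which equals \(b\). The only care needed is to state Comprehension as applying to relativized formulas with parameters in \(M\), which is what \(M \models\) Comprehension means. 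This completes ZF in \(M\).
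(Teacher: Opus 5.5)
Your proof is correct, and it is essentially the standard textbook argument behind Kunen's II.4.27, which the paper cites rather than proves: Extensionality and Foundation come from transitivity, and each remaining axiom follows by covering the relevant subset of \(M\) with an element of \(M\) and then cutting it down with Comprehension inside \(M\). For Replacement, you correctly use Replacement in \(V\), applied to the class function defined by \(\varphi^M\), to see that the image is a set. The one step you leave implicit, that \(M \models\) ``\(\omega\) is inductive'' once \(\omega \in M\), follows immediately from \(\Delta_0\)-absoluteness.
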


The following definition of \(M[x]\) is adapted from the ``Constructible sets" section of \cite{Monk}, which is itself adapted from \cite{Jech}.

\begin{Definition}
For each set \(S\) and each \(b \subseteq S\), we say that \(b\) is {\normalfont definable over} \((S, \in)\) if there exist a formula \(\varphi\) and \(a_1, ..., a_n \in S\) such that \(b = \{a \in S : (S, \in) \models \varphi(a, a_1, ..., a_n)\}\). \newline

\noindent For each set \(S\), let \(\text{def}(S) := \{b \subseteq S : b \text{ is definable over } (S, \in)\}\). \newline

\noindent Suppose \(M\) is an inner model of ZF, and \(x\) is a subset of \(M\). \newline
Define \(\langle L_{\alpha}[M, x] : \alpha \in ON \rangle\) by recursion: \newline
\(L_0[M, x] := \emptyset\); \newline
\(L_{\alpha+1}[M, x] := (V_{\alpha} \cap (M \cup \{x\})) \cup \text{def}(L_{\alpha}[M, x])\); \newline
and for limit \(\gamma\), \(L_{\gamma}[M, x] := \bigcup_{\alpha < \gamma} L_{\alpha}[M, x]\). \newline
Finally, let \(M[x] := \bigcup_{\alpha \in ON} L_{\alpha}[M, x]\).
\end{Definition}

We can find no detailed reference for the next two results, so we prove both of them.

\begin{Fact} \label{MDef}
Let \(M\) be an inner model of ZF. Let \(x \subseteq M\). Then: \newline
\noindent (1) \(\forall \alpha \in ON \thinspace [L_{\alpha}[M, x] \subseteq V_{\alpha}]\). \newline
\noindent (2) \(\forall \beta \in ON \thinspace \forall \alpha \leq \beta \thinspace [L_{\alpha}[M, x] \subseteq L_{\beta}[M, x]]\). \newline
\noindent (3) \(\forall \beta \in ON \thinspace [L_{\beta}[M, x] \text{ is transitive}]\) (so \(M[x]\) is transitive). \newline
\noindent (4) \(\forall b \subseteq M[x] \thinspace \exists c \in M[x] \thinspace [b \subseteq c]\). \newline
\noindent (5) \(M[x] \models \text{Comprehension}\). \newline
\noindent (6) \(M \cup \{x\} \subseteq M[x]\). \newline
\noindent (7) \(M[x]\) is an inner model of ZF. \newline
\noindent (8) If \(N\) is an inner model of ZF, and \(M \cup \{x\} \subseteq N\), then \(M[x] \subseteq N\).
\end{Fact}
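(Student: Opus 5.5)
I would prove the eight parts roughly in the order listed, since the later parts lean on the earlier ones. Parts (1)--(3) come from a single simultaneous induction on \(\beta\).

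For (1): \(V_\alpha \cap (M\cup\{x\}) \subseteq V_\alpha\), and \(\text{def}(L_\alpha[M,x]) \subseteq \mathcal{P}(L_\alpha[M,x]) \subseteq \mathcal{P}(V_\alpha) = V_{\alpha+1}\). This gives \(L_{\alpha+1}[M,x] \subseteq V_{\alpha+1}\); I would check this indexing against the definition. Limits are immediate.

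For (2) and (3) together, I would show by induction that each \(L_\alpha[M,x]\) is transitive and that \(L_\alpha[M,x] \subseteq L_{\alpha+1}[M,x]\).
\begin{itemize}
\item Transitivity: an element of \(V_\alpha \cap (M\cup\{x\})\) is a subset of \(V_\alpha \cap M\) or of \(V_\alpha \cap x\). Since \(M\) is transitive and \(x \subseteq M\), these lie in \(M \cap V_\alpha\), which is contained in \(L_{\alpha+1}\). Here I need an auxiliary claim: \(V_\alpha \cap (M \cup\{x\}) \subseteq L_{\alpha+1}\) trivially, and \(V_\beta \cap M \subseteq V_\alpha\cap M\) for \(\beta\le\alpha\). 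Elements of definable subsets of \(L_\alpha\) lie in \(L_\alpha \subseteq L_{\alpha+1}\) once monotonicity is known.
\item Monotonicity: \(L_\alpha \in\) ... rather, \(L_\alpha = \{a\in L_\alpha: a=a\}\) is definable, and each \(a \in L_\alpha\) equals \(\{b \in L_\alpha : b\in a\}\) by transitivity. So \(L_\alpha \subseteq \text{def}(L_\alpha)\).
\item Limits are unions of increasing transitive sets.
\end{itemize}

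For (4): given \(b\subseteq M[x]\), use Replacement in \(V\) to bound the stages at which elements of \(b\) appear by some \(\alpha\). Then \(b \subseteq L_\alpha[M,x] \in L_{\alpha+1}[M,x]\).

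For (6): an element of \(M\cup\{x\}\) of rank \(\alpha\) lies in \(V_{\alpha+1}\), hence in \(L_{\alpha+2}[M,x]\).

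Part (5), Comprehension, is the main obstacle, and I would handle it by the usual reflection argument for \(L\). Given a formula \(\varphi\), a set \(z \in M[x]\), and parameters in \(M[x]\), apply the Reflection Theorem in \(V\) to the hierarchy \(\langle L_\alpha[M,x]\rangle\). This hierarchy is continuous at limits and increasing, and it is definable from \(M\) and \(x\), using \(M\) as a class parameter; that is legitimate because \(M\) is a definable inner model. Reflection yields an \(\alpha\) such that \(L_\alpha[M,x]\) contains \(z\) and the parameters and \(\varphi\) is absolute between \(L_\alpha[M,x]\) and \(M[x]\). Then \(\{a\in z : \varphi^{M[x]}\} = \{a \in L_\alpha : a\in z \land \varphi^{L_\alpha}\}\), which belongs to \(\text{def}(L_\alpha) \subseteq M[x]\).

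Part (7) then follows from Fact \ref{KunenTheorem} using (3), (4) and (5). \(M[x]\) contains all ordinals by (6), since \(ON \subseteq M\).

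For (8), show by induction that \(L_\alpha[M,x]\in N\) and the sequence \(\langle L_\beta[M,x]:\beta\le\alpha\rangle \in N\). The key point is that \(\text{def}\) is absolute for transitive models of ZF. The delicate input is that \(V_\alpha\cap(M\cup\{x\})\) is in \(N\): \(M \cap V_\alpha\) is a set of \(M\), namely \(V_\alpha^M\), so it is in \(M\subseteq N\), and \(x \in N\). The recursion should therefore be run inside \(N\) with \(M\) replaced by the sequence \(\langle V^M_\alpha\rangle\), which is computed in \(M\) and hence in \(N\) for each \(\alpha\). The coherence of these pieces across \(\alpha\) is the other spot needing care.
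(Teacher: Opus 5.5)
Your proposal is correct and follows essentially the same route as the paper: a simultaneous induction for (2)--(3) using \(b=\{a\in L_\beta[M,x]: a\in b\}\), bounding the stages for (4), L\'evy reflection on the continuous increasing hierarchy for (5), Fact \ref{KunenTheorem} for (7), and absoluteness of \(\text{def}\) for (8). The ``coherence'' concern you raise in (8) does not actually arise if you do what the paper does: fix an arbitrary limit \(\xi\), note that the single set \(V_\xi^M\cup\{x\}\) lies in \(N\) and satisfies \(V_\alpha\cap(M\cup\{x\}) = V_\alpha^N\cap(V_\xi^M\cup\{x\})\) for \(\alpha<\xi\), and run the recursion up to \(\xi\) inside \(N\); each \(\xi\) is handled separately, so no gluing across \(\alpha\) is needed.
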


\begin{proof}
(1) holds by induction on \(\alpha\). \newline

\noindent We prove (2) and (3) simultaneously by induction on \(\beta\). The base case and the limit case are clear. For the successor case, let \(\beta \in ON\), and suppose (2) and (3) hold for \(\beta\). Then \(L_{\beta}[M, x] \subseteq L_{\beta + 1}[M, x]\) as desired, because for each \(b \in L_{\beta}[M, x]\), we have \newline
\(b = \{a \in L_{\beta}[M, x] : a \in b\} \in \text{def}(L_{\beta}[M, x])\). To show that \(L_{\beta + 1}[M, x]\) is transitive, let \newline
\(b \in L_{\beta + 1}[M, x]\). If \(b \in \text{def}(L_{\beta}[M, x])\), then \(b \subseteq L_{\beta}[M, x] \subseteq L_{\beta + 1}[M, x]\). Otherwise, we must have \(b \in V_{\beta} \cap (M \cup \{x\})\), so \(b \subseteq V_{\beta} \cap (M \cup \{x\}) \subseteq L_{\beta + 1}[M, x]\). \newline

\noindent For (4), let \(b \subseteq M[x]\). For each \(a \in b\), let \(\xi(a)\) be least with \(a \in L_{\xi(a)}[M, x]\). \newline
Let \(\eta := \text{sup}\{\xi(a) + 1 : a \in b\}\) and \(c := L_{\eta}[M, x]\). \newline
Then we have \(b \subseteq c\) and \(c \in \text{def}(L_{\eta}[M, x]) \subseteq L_{\eta + 1}[M, x] \subseteq M[x]\). \newline

\noindent For (5), let \(\varphi(a, c, v_1, ..., v_n)\) be a formula. We must show \newline
\(\forall c, v_1, ..., v_n \in M[x] \thinspace \exists b \in M[x] \thinspace \forall a \in M[x] \thinspace [a \in b \leftrightarrow (a \in c \land M[x] \models \varphi(a, c, \vec{v}))]\). \newline
Let \(c, v_1, ..., v_n \in M[x]\). Take \(\xi \in ON\) with \(c, v_1, ..., v_n \in L_{\xi}[M, x]\). By Lévy Reflection (since \(\langle L_{\alpha}[M, x]: \alpha \in ON \rangle\) is continuous and increasing), take \(\eta > \xi\) so that for each \(a \in L_{\eta}[M, x]\), the models \(L_{\eta}[M, x]\) and \(M[x]\) agree on \(\varphi(a, c, \vec{v})\). Let \newline
\(b := \{a \in L_{\eta}[M, x] : a \in c \thinspace \land \thinspace L_{\eta}[M, x] \models \varphi(a, c, \vec{v})\}\). \newline
Then \(b \in L_{\eta + 1}[M, x] \subseteq M[x]\) and \(\forall a \in M[x] \thinspace [a \in b \leftrightarrow (a \in c \land M[x] \models \varphi(a, c, \vec{v}))]\). \newline

\noindent (6) follows from the definition of \(L_{\alpha + 1}[M, x]\). So \(ON \subseteq M[x]\), so (by (3), (4), (5), and Fact \ref{KunenTheorem}) we have (7). \newline

\noindent For (8), suppose \(N\) is an inner model of ZF, with \(M \cup \{x\} \subseteq N\). Let \(\xi \in ON\) be an arbitrary limit. Then \(V_{\xi}^M \cup \{x\} \in N\), and for each \(\alpha < \xi\), \(L_{\alpha + 1}[M, x] = (V_{\alpha}^N \cap (V_{\xi}^M \cup \{x\})) \cup \text{def}(L_{\alpha}[M, x])\). So by the absoluteness of ``def", we have \(\langle L_{\alpha}[M, x] : \alpha \leq \xi \rangle \in N\).
\end{proof}

Theorem \ref{MTheorem} uses the following result, which says that certain well-orders in \(M[x]\) are definable from well-orders in \(M\).

\begin{Fact} \label{KTheorem}
There is a class function \(K\) such that for each inner model \(M\) of ZF, each limit \(\xi \in ON\), each well-order \(\triangleleft\) of \(V_{\xi}^M\), and each \(x \subseteq M\): \newline
\noindent (1) \(K(\triangleleft, x)\) is a well-order of \(L_{\xi}[M, x]\), and \newline
\noindent (2) for each transitive \(N \models\) ZF with \(\triangleleft, x \in N\), we have \(K^N(\triangleleft, x) = K(\triangleleft, x)\). \newline
In particular, if \(M\) is an inner model of ZFC and \(x \subseteq M\), then \(M[x] \models\) AC.
\end{Fact}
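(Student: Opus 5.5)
The plan is to define \(K(\triangleleft, x)\) by a uniform transfinite recursion along the hierarchy \(\langle L_{\alpha}[M, x] : \alpha \leq \xi \rangle\). The recursion uses only \(\triangleleft\), \(x\), and the "def" operator. Since every ingredient is absolute for transitive models of ZF, clause (2) will then follow. The key preliminary observation is that \(\triangleleft\) determines \(V_{\xi}^M\) as its field. Hence each \(V_{\alpha} \cap (M \cup \{x\}) = V_{\alpha} \cap (\text{field}(\triangleleft) \cup \{x\})\) for \(\alpha < \xi\) is computable from \(\triangleleft\) and \(x\). So the sequence \(\langle L_{\alpha}[M, x] : \alpha \leq \xi \rangle\) is itself definable from \(\triangleleft, x\) in a way that does not mention \(M\). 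This is the same computation as in the proof of Fact \ref{MDef}(8).

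I then define well-orders \(\triangleleft_{\alpha}\) of \(L_{\alpha}[M, x]\) for \(\alpha \leq \xi\) by recursion, each end-extending the previous ones.
\begin{itemize}
\item At limits, take the union.
\item At successors \(\alpha + 1\), the new elements come in two kinds. First come those in \(V_{\alpha} \cap (M \cup \{x\})\) but not in \(L_{\alpha}[M, x]\). I order these by putting \(x\) first (if new), then the rest ordered by \(\triangleleft\).
\item Next come the new elements of \(\text{def}(L_{\alpha}[M, x])\). Each such \(b\) is assigned its least definition \((\varphi, \vec{a})\). Formulas are coded by natural numbers, and parameter tuples are ordered lexicographically via \(\triangleleft_{\alpha}\). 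Then order these \(b\) by their least definitions, as in the standard proof that \(L\) is well-ordered.
\item All new elements are placed after the elements of \(L_{\alpha}[M, x]\).
\end{itemize}
Set \(K(\triangleleft, x) := \triangleleft_{\xi}\). Each stage is a well-order, because it is a concatenation of well-orders, so (1) holds by induction.

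For (2), let \(N\) be transitive with \(N \models\) ZF and \(\triangleleft, x \in N\). By induction on \(\alpha \leq \xi\), \(L_{\alpha}[M, x]\) and \(\triangleleft_{\alpha}\) as computed in \(N\) agree with the true ones. Three facts drive the induction: "def" is absolute (\(\Delta_1^{\text{ZF}}\)), the field of \(\triangleleft\) and rank are absolute, and recursions with absolute steps are absolute. The main obstacle is making sure the successor step never quietly refers to \(M\) or \(V\) beyond \(V_{\xi}^M\). For this I check that \(V_{\alpha}\) is only ever intersected with \(\text{field}(\triangleleft) \cup \{x\}\), whose elements have rank computable in \(N\).

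Finally, for the "in particular": let \(M \models\) ZFC and \(x \subseteq M\). Given a set \(y \in M[x]\), pick a limit \(\xi\) with \(y \in L_{\xi}[M, x]\), and a well-order \(\triangleleft \in M\) of \(V_{\xi}^M\). Then \(\triangleleft, x \in M[x]\) by Fact \ref{MDef}(6), and \(M[x]\) is a transitive model of ZF by Fact \ref{MDef}(7). So by (2), \(K(\triangleleft, x) = K^{M[x]}(\triangleleft, x) \in M[x]\) well-orders \(L_{\xi}[M, x] \ni y\). Since \(L_{\xi}[M, x]\) is transitive, \(y \subseteq L_{\xi}[M, x]\), and restricting \(K(\triangleleft, x)\) to \(y\) gives a well-order of \(y\) in \(M[x]\). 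Thus \(M[x] \models\) AC.
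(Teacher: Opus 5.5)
Your proposal is correct and follows essentially the paper's proof: you recover \(V_{\xi}^M\) (and hence \(\xi\)) from \(\triangleleft\), rebuild \(\langle L_{\alpha}[M, x] : \alpha \leq \xi \rangle\) without reference to \(M\), and well-order each successor stage by end-extension using \(\triangleleft\) and least definitions, with the absoluteness of def and rank giving (2). The only differences are cosmetic: you concatenate the new elements of a successor stage in a different order, and you also spell out the ``in particular'' clause, which the paper leaves implicit.
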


\begin{proof}
Let \(M\) be an inner model of ZF. Let \(\xi \in ON\) be a limit. Let \(\triangleleft\) be a well-order of \(V_{\xi}^M\). Let \(x \subseteq M\). Note that \(\xi\) and \(V_{\xi}^M\) are definable from \(\triangleleft\). (So for each model \(N\) as in (2), we have \(\forall \alpha \leq \xi \thinspace [V_{\alpha}^M \subseteq V_{\alpha}^N]\).) \newline

\noindent Using \(\triangleleft\) and \(x\), define \(\langle S_{\alpha}: \alpha \leq \xi \rangle\) by recursion: \newline
\(S_0 := \emptyset\); \newline
\(S_{\alpha + 1} := (V_{\alpha} \cap (V_{\xi}^M \cup \{x\})) \cup \text{def}(S_{\alpha})\); \newline
and for limit \(\gamma\), \(S_{\gamma} := \bigcup_{\alpha < \gamma} S_{\alpha}\). \newline
(So for each model \(N\) as in (2), we have \(\forall \alpha \leq \xi \thinspace [S_{\alpha}^N = L_{\alpha}[M, x]]\).) \newline

\noindent Define \(\langle <_{\alpha} \thinspace : \alpha \leq \xi \rangle\) by recursion so that each \(<_{\alpha}\) well-orders \(S_{\alpha}\), as follows. Let \(<_0 \thinspace := \emptyset\). \newline
Assume that \(<_{\alpha}\) has been defined. Using first \(<_{\alpha}\), then a canonical well-order of all formulas: well-order \(S_{\alpha}^{<\omega}\), then well-order \(\text{def}(S_{\alpha})\). Define a well-order \(<_{\alpha+1}\) of \(S_{\alpha+1}\) by having the elements of \(S_{\alpha}\) come first (ordered by \(<_{\alpha}\)), followed by the new elements of \(\text{def}(S_{\alpha})\), followed by the new elements of \(V_{\alpha} \cap V_{\xi}^M\) (ordered by \(\triangleleft\)), followed by (if \(x \in V_{\alpha}\) and \(x\) is a new element) \(x\). \newline
For limit \(\gamma\), let \(<_{\gamma} \thinspace := \bigcup_{\alpha < \gamma} <_{\alpha}\). \newline

\noindent Finally, let \(K(\triangleleft, x) := \thickspace <_{\xi}\).
\end{proof}

The following unpublished result of Greg Hjorth will be crucial. We include a proof, taken from \cite{Brendle}.

\begin{Fact} \label{Hjorth}
There is no sequence \(\langle f_n : n \in \omega \rangle\) of elements of \(\omega^{\omega}\) such that \newline
\(\forall m \in \omega \thinspace [f_m \text{ dominates } L[\langle f_n : n > m \rangle]]\).
\end{Fact}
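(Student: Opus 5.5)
We argue by contradiction. Suppose \(\langle f_n : n \in \omega \rangle\) is such a sequence, and write \(T_m := \langle f_n : n \geq m \rangle\). Since \(T_{m+1}\) is definable from \(T_m\) (drop the first entry), the models \(L[T_m]\) form a decreasing sequence \(L[T_0] \supseteq L[T_1] \supseteq \cdots\). Each \(f_m\) lies in \(L[T_m]\) and dominates \(L[T_{m+1}]\). The plan is to produce, for some \(m\), a real in \(L[T_{m+1}]\) that \(f_m\) fails to dominate. The idea is to encode information about \(f_m\) that is ``reconstructible at the tail'' because it is determined uniformly by the later functions.

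First I collect the real bookkeeping. For \(n \geq 1\) the real \(f_n\) lies in \(L[T_n]\), so \(f_{n-1}\) dominates \(f_n\). Let
\[ t_n := \min\{k : \forall j \geq k \; [f_n(j) > f_{n+1}(j)]\}. \]
Then \(\langle t_n : n > m \rangle \in L[T_{m+1}]\), uniformly in \(m\). For each \(m\) I form diagonal reals in \(L[T_{m+1}]\) from the tail alone. A first candidate is
\[ g_m(k) := \max\{f_n(k) : m < n \leq k\}, \]
and a second is the threshold majorant \(c_m(k) := \max\{t_n : m < n \leq k\}\). Composites such as \(f_{m+1} \circ c_m\) are also available. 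By hypothesis \(f_m\) eventually dominates each of these.

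The core step is to choose a tail-definable real whose domination by \(f_m\) forces some quantity attached to \(f_m\) below a bound that is itself determined by the later \(f_n\). Iterating this down the indices should then give an infinite descending sequence of natural numbers, or a contradiction at a single index. Concretely, I would compare the thresholds at which \(f_m\) overtakes \(g_m\) with the numbers \(t_n\) for \(n > m\). Domination of \(c_m\)-type reals by \(f_m\) should force these thresholds to decrease strictly in \(m\), which is impossible in \(\omega\).

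I expect the main obstacle to be exactly the choice of this diagonal real. The obvious candidates are eventually below \(f_m\) without contradiction, since \(f_m\) is the fastest-growing function in the picture. The intended witness must therefore use the tail's own thresholds so that being dominated by \(f_m\) yields a definite combinatorial inequality between consecutive thresholds. The first thing I would try is the real \(k \mapsto f_{m+1}(c_m(k)+k)\), and checking whether its domination by \(f_m\) yields \(t_m > t_{m+1}\), or a similar strict decrease.
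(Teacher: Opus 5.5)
Your proposal is not yet a proof. The decisive step, finding a real in \(L[\langle f_n : n>m\rangle]\) whose domination by \(f_m\) forces a strict decrease, is left open, and the mechanism you sketch for it cannot work. The hypothesis does not see finite changes. Suppose you alter a single \(f_k\) at finitely many arguments. Then none of the models \(L[\langle f_n : n>j\rangle]\) changes, and no eventual domination fails. So the altered sequence satisfies the hypothesis whenever the original does.

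Such an alteration can still move your thresholds. For example, set \(f_{m+2}(J) := f_{m+1}(J)+1\) at one large \(J\). This makes \(t_{m+1} > J\) while leaving \(t_m\) (which depends only on \(f_m, f_{m+1}\)) untouched. So an inequality like \(t_m > t_{m+1}\) can only follow from the hypothesis vacuously, that is, once you have already shown that no such sequence exists. Thresholds, overtaking points, and other natural numbers read off from the \(f_n\) cannot serve as the decreasing rank. A contradiction at a single index, or at finitely many, is also impossible, since that part of the hypothesis is consistent (for instance after iterating Hechler forcing finitely often). What you need is a rank that is an invariant of the degree of the tail.

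The second problem is which reals you try to dominate. Maxima, thresholds and composites are all arithmetical in the tail, and the paper's proof applies the hypothesis to much more complicated reals. Let \(g := \langle f_n : n>m\rangle\) and \(f := f_m\). For \(x\) recursive in \(g\) with \(T_x\) ill-founded, absoluteness puts a branch of \(T_x\) into \(L[g]\). In general no such branch is hyperarithmetic in \(g\) (take \(x\) coding a relativized Harrison order). Because \(f\) dominates these branches, for \(x\) recursive in \(g\) we get \(x \in WO\) iff every \(T^n_x(f)\) is finite. This makes \(WO\), restricted to \(g\)-recursive codes, \(\Delta^1_1(f,g)\). The effective Boundedness Theorem then gives \(\omega_1^{CK}(f,g) > \omega_1^{CK}(g)\). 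Hence \(\omega_1^{CK}(\langle f_n : n \geq m\rangle)\) is strictly decreasing in \(m\), which is impossible. This ordinal is exactly the degree-invariant rank your plan is missing.
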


\begin{proof}
Let \(\pi : \omega \times \omega \rightarrow \omega\) be the Cantor pairing function. \newline
For each \(x \in \omega^{\omega}\), let \(\sqsubset_x \thinspace := \{(m, n) \in \omega \times \omega: x(\pi(m,n)) > 0\}\), \newline and let \(T_x := \{s \in \omega^{<\omega} : \forall i, j < \lvert s \rvert \thinspace [i < j \rightarrow s(j) \sqsubset_x s(i)]\}\) be the corresponding tree. \newline
For each \(n \in \omega\) and \(f,x \in \omega^{\omega}\), let \(T_x^n(f) := \{s \in T_x : \forall i < \lvert s \rvert \thinspace [i \geq n \rightarrow s(i) \leq f(i)]\}\). \newline

\noindent Let \(WO := \{x \in \omega^{\omega} : \thickspace \sqsubset_x \text{is a well-order of some nonempty subset of } \omega\}\). \newline
Then \(\forall x \in \omega^{\omega} \thinspace [T_x \text{ is well-founded} \leftrightarrow x \in WO]\). For each \(f \in \omega^{\omega}\), let \newline
\(\omega_1^{CK}(f) := \text{sup}\{\text{type}(\sqsubset_x) : x \in WO \text{ and } x \text{ is recursive in } f\}\). \newline

\noindent The effective version of the Boundedness Theorem (see e.g. 4A.4 in \cite{Moschovakis} for a proof) says that for each \(P \subseteq \omega^{\omega}\) and each \(\Delta_1^1\) function \(h: \omega^{\omega} \rightarrow \omega^{\omega}\) satisfying \(\forall x \in \omega^{\omega} \thinspace [x \in P \leftrightarrow h(x) \in WO]\): the set \(P\) is \(\Delta_1^1\) if and only if \(\text{sup}\{\text{type}(\sqsubset_{h(x)}) : x \in P\} < \omega_1^{CK}\).

\begin{Claim1}
Let \(f, g, x \in \omega^{\omega}\). Suppose \(f\) dominates \(L[g]\), and \(x\) is recursive in \(g\). Then \newline
\(x \in WO \leftrightarrow \forall n \in \omega \thinspace [T_x^n(f) \text{ is well-founded}]\).
\end{Claim1}

\noindent \textit{Proof of Claim 1.} If \(n \in \omega\) and \(T_x^n(f)\) is ill-founded, then \(T_x\) is ill-founded, so \(x \notin WO\). \newline

\noindent Conversely, suppose \(x \notin WO\). Then \(T_x\) is ill-founded. By absoluteness, take an infinite branch \(y \in \omega^{\omega} \cap L[g]\) through \(T_x\). Then \(f\) dominates \(y\). Take \(n \in \omega\) so that \(\forall i \geq n \thinspace [y(i) \leq f(i)]\). Then \(y\) is an infinite branch through \(T_x^n(f)\). \hfill \qedsymbol

\begin{Claim2}
Let \(f, g \in \omega^{\omega}\). Suppose \(f\) dominates \(L[g]\). Then \(\omega_1^{CK}(f, g) > \omega_1^{CK}(g)\).
\end{Claim2}

\noindent \textit{Proof of Claim 2.} Define \(h : \omega^{\omega} \rightarrow \omega^{\omega}\) by \(h(x) = \begin{cases} 
x & \text{if } x \text{ is recursive in } g \\
0 & \text{otherwise.}
\end{cases}
\) \newline

\noindent Let \(P := \{x \in \omega^{\omega} : h(x) \in WO\}\). Then for each \(x \in \omega^{\omega}\), we have \newline
\(x \in P \leftrightarrow h(x) \in WO \leftrightarrow x \text{ is recursive in } g \text{ and } x \in WO\) \newline
\(\leftrightarrow\) (by Claim 1) \(x \text{ is recursive in } g \text{ and } \forall n \in \omega \thinspace [T_x^n(f) \text{ is well-founded}]\) \newline
\(\leftrightarrow\) (by Kőnig's Lemma) \(x \text{ is recursive in } g \text{ and } \forall n \in \omega \thinspace [T_x^n(f) \text{ is finite}]\). \newline

\noindent So \(h\) is \(\Delta_1^1(g)\) and \(P\) is \(\Delta_1^1(f, g)\). \newline
So \(\omega_1^{CK}(f, g) >\) (by the relativized effective Boundedness Theorem) \(\text{sup}\{\text{type}(\sqsubset_{h(x)}) : x \in P\}\) \newline
\(= \text{sup}\{\text{type}(\sqsubset_{h(x)}) : x \in WO \text{ and } x \text{ is recursive in } g\}\) \newline
\(= \text{sup}\{\text{type}(\sqsubset_x) : x \in WO \text{ and } x \text{ is recursive in } g\} = \omega_1^{CK}(g)\). \hfill \qedsymbol \newline

\noindent Finally, toward a contradiction, suppose \(\langle f_n : n \in \omega \rangle\) is a sequence of elements of \(\omega^{\omega}\), and \(\forall m \in \omega \thinspace [f_m \text{ dominates } L[\langle f_n : n > m \rangle]]\). Then by Claim 2, the sequence \newline
\(\langle \omega_1^{CK}(\langle f_n : n > m \rangle) : m \in \omega \rangle\) is strictly decreasing.
\end{proof}

Lemma \ref{PreservesLemma} uses the ground model definability theorem, which was proved independently by Laver \cite{Laver} and Woodin \cite{Woodin}:

\begin{Fact} \label{LaverWoodin}
There is a formula \(\varphi(x, y)\) such that for each model \(M\) of ZFC, each forcing \(\mathbb{P} \in M\), and each filter \(G \subseteq \mathbb{P}\) that is \(\mathbb{P}\)-generic over \(M\), we have \newline
\(\forall a \in M[G] \thinspace [a \in M \leftrightarrow M[G] \models \varphi(a, \mathcal{P}(\lvert \mathbb{P} \rvert)^M)]\).
\end{Fact}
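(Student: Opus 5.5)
The plan is to follow Hamkins' approach through the \(\delta\)-cover and \(\delta\)-approximation properties. Fix \(M\), \(\mathbb{P} \in M\), \(G\), and put \(\kappa := \lvert \mathbb{P} \rvert^M\) and \(\delta := (\kappa^+)^M\). Recall that for transitive \(N \subseteq W\):
\begin{itemize}
\item \(N\) has the \(\delta\)-cover property in \(W\) if every \(A \subseteq N\) in \(W\) with \(\lvert A\rvert^W < \delta\) is covered by some \(B \in N\) with \(\lvert B\rvert^N < \delta\);
\item \(N\) has the \(\delta\)-approximation property if every \(A \subseteq N\) in \(W\) with \(A \cap B \in N\) for all such small \(B \in N\) already lies in \(N\).
\end{itemize}

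\textbf{Step 1: the properties hold for \(M\).} Since \(\mathbb{P}\) has the \(\delta\)-c.c. in \(M\), the cover property is the usual antichain argument. Approximation comes from the standard name-counting argument: if \(A \notin M\) has a name \(\dot A\), one builds in \(M\) a \(\delta\)-sequence of small pieces deciding \(\dot A\) incompatibly, contradicting \(\lvert\mathbb{P}\rvert = \kappa < \delta\). For arbitrary \(\mathbb{P}\) of size \(\kappa\) one may pass to the Boolean completion, or argue directly with conditions as an ordinal-coded set of size \(\kappa\).

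\textbf{Step 2: local uniqueness (the heart of the proof).} Suppose \(N, N' \subseteq W\) are transitive models of a sufficiently large finite fragment of ZFC, of the same height \(\theta\). Suppose both have the \(\delta\)-cover and \(\delta\)-approximation properties in \(W\), and \(\mathcal{P}(\delta)^N = \mathcal{P}(\delta)^{N'}\) and \((\delta^+)^N = (\delta^+)^{N'} = (\delta^+)^W\). Then \(N = N'\). The argument goes by \(\in\)-induction, or by induction on ranks coded as sets of ordinals: it suffices to show that \(N\) and \(N'\) have the same sets of ordinals. For a set of ordinals \(A \in N\), use approximation in \(N'\). Given small \(B \in N'\), cover \(B\) by a small \(C \in N\) via cover for \(N\), then by a small \(D \in N'\) via cover for \(N'\), and iterate \(\omega\) times. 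A closure argument gives a small set \(E\) lying in both models. Then \(A \cap E\) is essentially a subset of \(\delta\) after coding, so it lies in \(N'\) by the agreement on \(\mathcal{P}(\delta)\). Hence \(A \cap B \in N'\), and approximation gives \(A \in N'\). I expect this to be the main obstacle. The delicate point is arranging that the \(\omega\)-chain stays small and is itself an element of both models. This is handled by the agreement on \(\delta^+\) and by working with \(\omega\)-closed unions, as in Hamkins' proof.

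\textbf{Step 3: writing down \(\varphi\).} The given parameter is \(r = \mathcal{P}(\kappa)^M\), from which \(\kappa\) is read off. From \(r\) we recover \(\mathcal{P}(\delta)^M\) by coding subsets of \(\delta = \kappa^+\) via well-orders of \(\kappa\) lying in \(r\); this is where the choice of parameter is used. We also recover \((\delta^+)^M = (\delta^+)^{M[G]}\), which holds because the forcing is small. Let \(\varphi(a, r)\) say: there is a \(\theta\) of large cofinality and a transitive \(N \ni a\) of height \(\theta\) satisfying the chosen ZFC fragment, with \(\mathcal{P}(\delta)^N\) equal to the set coded by \(r\), and with the \(\delta\)-cover and \(\delta\)-approximation properties in \(V_\theta\).

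By Step 1, \(V_\theta^M\) witnesses \(\varphi(a, r)\) for every \(a \in M\) and every suitable \(\theta\), using reflection in \(M[G]\). By Step 2, any witness \(N\) equals \(V_\theta^M\), so \(a \in M\). This gives the equivalence \(a \in M \leftrightarrow M[G] \models \varphi(a, \mathcal{P}(\lvert \mathbb{P} \rvert)^M)\).
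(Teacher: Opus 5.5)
The paper does not prove this Fact; it quotes it from Laver and Woodin. So the benchmark is the standard Hamkins-style argument you are following. Steps 1 and 3 are right in outline, but Step 2 has a genuine gap exactly at the point you flag, and the mechanism you describe would not work.

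\textbf{Why the \(\omega\)-chain fails.} Take an alternating chain \(B\subseteq C_0\subseteq D_0\subseteq C_1\subseteq\cdots\). You have no way to put \(E=\bigcup_n C_n\) into \(N\) or \(N'\). The only available tool is approximation, and for a small \(F\in N\) the set \(F\cap E=\bigcup_n(F\cap C_n)\) is an increasing union along a sequence that exists only in \(W\). Since \(F\) can be infinite, this union need not stabilize at any finite stage. Agreement on \(\delta^+\) cannot make a union belong to a model. In fact, if a common cover of size \(<\delta\) were available, \(A\cap E\) would be coded by a subset of an ordinal below \(\delta\), and the \(\delta^+\) hypothesis would never be used. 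That is a sign the sketch is not the real argument.

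\textbf{The missing idea: a chain of length \(\delta\).} Run the alternating covering for \(\delta\) steps instead of \(\omega\). Put \(C_\alpha\in N\) for even \(\alpha\) and \(C_\alpha\in N'\) for odd \(\alpha\). At limits \(\gamma<\delta\), first take the union (still of size \(<\delta\), since \(\delta\) is regular in \(W\)) and then cover it. Let \(C=\bigcup_{\alpha<\delta}C_\alpha\).
\begin{itemize}
\item For any \(F\in N\) with \(\lvert F\rvert^N<\delta\), regularity of \(\delta\) gives an even \(\alpha<\delta\) with \(F\cap C=F\cap C_\alpha\in N\). So \(C\in N\) by approximation, and likewise \(C\in N'\).
\item Now \(C\) has size \(\delta\), and this is exactly where \((\delta^+)^N=(\delta^+)^W\) is used. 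Since \(\text{ot}(C)<(\delta^+)^N\), the model \(N\) has a well-order of \(\delta\) of type \(\text{ot}(C)\), coded by a set in \(\mathcal{P}(\delta)^N=\mathcal{P}(\delta)^{N'}\).
\item Transporting the collapse of \(A\cap C\) along this well-order gives a subset of \(\delta\) lying in \(N\), hence in \(N'\). So \(A\cap C\in N'\), and \(A\cap B=(A\cap C)\cap B\in N'\).
\item Approximation for \(N'\) then gives \(A\in N'\), with no induction on ranks needed.
\end{itemize}

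\textbf{Two smaller repairs.}
\begin{itemize}
\item In Step 3, well-orders of \(\kappa\) in \(r\) only decode bounded subsets of \(\delta=\kappa^+\). An unbounded subset of \(\kappa^+\) is not coded by a single subset of \(\kappa\), and \(2^{\kappa^+}>2^{\kappa}\) is possible in \(M\). To recover \(\mathcal{P}(\delta)^M\) you also need the approximation property from Step 1: \(X\subseteq\delta\) lies in \(M\) iff every \(X\cap\alpha\) with \(\alpha<\delta\) does.
\item In the backward direction, \(\theta\) is supplied by the witness, so \(V_\theta^M\) need not satisfy your ZFC fragment. Run Step 2 asymmetrically: only \(N\) needs the fragment, and \(M\) itself does the decoding. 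The \(\delta\)-chain argument above permits this.
\end{itemize}
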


By refining the ground model definability theorem, Fuchs, Hamkins, and Reitz \cite{FuchsHamkinsReitz} showed that the \textit{mantle} is a definable class:

\begin{Definition} An inner model \(M\) of ZFC is a {\normalfont ground} if for some forcing \(\mathbb{P} \in M\) and some filter \(G \subseteq \mathbb{P}\): \(G\) is \(\mathbb{P}\)-generic over \(M\), and \(M[G] = V\). The {\normalfont mantle} is the intersection of all grounds.
\end{Definition}

Usuba showed that the mantle is a forcing invariant inner model of ZFC \cite{Usuba1}, and that in the presence of an \textit{extendible} cardinal, the mantle is a ground \cite{Usuba2}:

\begin{Definition} A cardinal \(\kappa > \aleph_0\) is {\normalfont extendible} if for each \(\alpha > \kappa\), there exist an ordinal \(\beta\) and an elementary embedding \(j : V_{\alpha} \rightarrow V_{\beta}\) with critical point \(\kappa\).
\end{Definition}

\begin{Fact} \label{Usuba}
The mantle is forcing invariant (i.e. if \(V[G]\) is any generic extension of \(V\), then \(V\) and \(V[G]\) have the same mantle), and the mantle is an inner model of ZFC. If there is an extendible cardinal, then the mantle is a ground.
\end{Fact}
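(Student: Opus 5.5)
The plan is to derive all three assertions from two ingredients: the uniform definability of grounds, which refines Fact \ref{LaverWoodin}, and Usuba's \emph{strong downward directed grounds} property (strong DDG). Strong DDG says that for any set-indexed family \(\langle W_r : r \in X \rangle\) of grounds of \(V\) there is a ground \(W \subseteq \bigcap_{r} W_r\).

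First I would recall from Fuchs, Hamkins, and Reitz that there is a single formula \(\psi(x, r)\) such that the classes \(W_r := \{x : \psi(x, r)\}\) are exactly the grounds of \(V\), as \(r\) ranges over sets. This is a uniformization of Fact \ref{LaverWoodin}: \(W_r\) is obtained from a parameter coding \(\mathcal{P}(\lvert \mathbb{P} \rvert)^W\) together with a forcing \(\mathbb{P}\). It makes the mantle \(\mathbb{M} = \bigcap_r W_r\) a definable class.

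The central step is strong DDG itself, which I expect to be the main obstacle. My approach would be the one via cover and approximation properties. Fix grounds \(W_r = \) a ground with \(V = W_r[G_r]\), \(G_r \subseteq \mathbb{P}_r\), for \(r \in X\). Choose a regular \(\kappa\) larger than \(\lvert X \rvert\) and all \(\lvert \mathbb{P}_r \rvert\); each \(W_r\) then has the \(\kappa\)-cover and \(\kappa\)-approximation properties in \(V\). Using a large \(\lambda\) and the structure of \(\kappa\)-closed elementary submodels of \(H_\lambda\) lying in every \(W_r\), I would build a transitive model \(W\) contained in all the \(W_r\). I would then verify via Bukovský's characterization that \(V\) is a generic extension of \(W\): \(V\) is a \(\kappa\)-c.c. extension of \(W\) iff every function from ordinals to ordinals in \(V\) is covered by a function in \(W\) with values of size \(<\kappa\). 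Hamkins' uniqueness of grounds with the approximation and cover properties identifies \(W\) as a ground.

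Given strong DDG, the remaining steps are routine.

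\begin{enumerate}
\item \emph{Forcing invariance.} Every ground of \(V\) is a ground of \(V[G]\), by composing forcings. Conversely, any ground \(U\) of \(V[G]\) and the ground \(V\) of \(V[G]\) have a common ground \(W \subseteq U \cap V\) by DDG applied in \(V[G]\). This \(W\) is a ground of \(V\) by the intermediate model theorem. Hence the two intersections coincide.
\item \emph{The mantle satisfies ZFC.} By Fact \ref{KunenTheorem}, it suffices to show that \(\mathbb{M}\) satisfies Comprehension and that every subset of \(\mathbb{M}\) is covered by an element of \(\mathbb{M}\).
\begin{enumerate}
\item For the covering condition, given \(b \subseteq \mathbb{M}\), take \(\alpha\) with \(b \subseteq V_\alpha\). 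Applying strong DDG to the set-many grounds that differ below rank \(\alpha\) gives a ground \(W\) with \(V_\alpha^{W} = V_\alpha^{\mathbb{M}}\), so \(V_\alpha \cap \mathbb{M}\) lies in \(\mathbb{M}\).
\item Comprehension follows by reflecting the definition of \(\mathbb{M}\) in the same manner.
\item For Choice, given \(V_\alpha^{\mathbb{M}}\), I would pick a single ground \(W\) with \(V_\alpha^W = V_\alpha^{\mathbb{M}}\), as in the covering argument. Then \(W \models\) AC gives a well-order, and I need that well-order to lie in \(\mathbb{M}\). I would handle this by choosing the ground so that \(V_{\alpha+1}^W\) is also stabilized.
\end{enumerate}
\item \emph{The extendible case.} Let \(\kappa\) be extendible. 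I would show that every ground \(W = W_r\) is a ground via a forcing of size \(<\kappa\). Take \(j : V_\alpha \to V_\beta\) with critical point \(\kappa\) and \(\alpha\) beyond the rank of \(r\) and \(\mathbb{P}\). Elementarity transfers the \(\kappa\)-cover and \(\kappa\)-approximation properties of \(j(W \cap V_\alpha)\) back to \(W\). Hamkins' uniqueness, combined with Bukovský, then yields a \(<\kappa\)-sized forcing. Since such a ground is determined by a parameter in \(V_{\kappa'}\) for a fixed \(\kappa'\), there are only set-many grounds. Strong DDG then gives a ground contained in all of them, and that ground is the mantle.
\end{enumerate}
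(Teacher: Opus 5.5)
The paper gives no proof of this Fact. It quotes it from Usuba, with the definability of the mantle due to Fuchs--Hamkins--Reitz, so your proposal has to stand on its own as a reconstruction of those results.

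The parts you derive from strong DDG are the standard ones and are essentially fine:
\begin{itemize}
\item Forcing invariance, via composition of forcings plus the intermediate model theorem, is correct.
\item In the covering step you need one more use of DDG. For any ground $U$, a common ground $W'\subseteq U\cap W$ still has $V_\alpha^{W'}=V_\alpha\cap\mathbb{M}$, so $V_\alpha\cap\mathbb{M}\in U$.
\item For Choice, you must stabilize a rank that actually contains a well-order of $V_\alpha^{\mathbb{M}}$ (e.g.\ $V_{\alpha+3}$, or take $\alpha$ limit).
\item For Comprehension, reflect $\varphi^{\mathbb{M}}$ to some $\mathbb{M}\cap V_\eta$. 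The required set is then definable over $\mathbb{M}\cap V_\eta$, which lies in every ground.
\item The final step, ``set-many grounds plus strong DDG gives a smallest ground, which is the mantle,'' is right.
\end{itemize}

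The gaps are in the two steps that carry all the weight.

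\emph{Strong DDG.} It is not proved, and the sketch as written cannot work. An elementary submodel of $H_\lambda$ closed under $\omega$-sequences contains every real of $V$. So if some $\mathbb{P}_r$ adds a real, no such model can be contained in $W_r$, let alone be an element of it. Also, Hamkins' uniqueness theorem is not an existence result, so it cannot ``identify $W$ as a ground.'' The whole content of Usuba's theorem is to construct some $W\subseteq\bigcap_r W_r$ and verify two things: $W\models\mathrm{ZFC}$, and $V$ has Bukovsk\'y's uniform covering property over $W$. You name these tools but carry out neither.

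\emph{The extendible step.} Elementarity only gives $j(W\cap V_\alpha)$ the $j(|\mathbb{P}|^+)$-cover and approximation properties in $V_\beta$. That is useless when $|\mathbb{P}|\ge\kappa$, and nothing in your argument supplies the $\kappa$-properties. In any case, $\kappa=\mathrm{crit}(j)\notin\mathrm{ran}(j)$, so a statement about $\kappa$ does not pull back to $W$.

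What you would actually need is that $V_\beta$ satisfies ``$j(W\cap V_\alpha)$ is a ground via a forcing of size $<j(\kappa)$.'' That statement does pull back to the desired conclusion. The natural small ground in $V_\beta$ is $W\cap V_\beta$, via $\mathbb{P}\in V_{j(\kappa)}$. But nothing in your argument connects $j(W\cap V_\alpha)=W_{j(r)}\cap V_\beta$ with $W_r\cap V_\beta$. Establishing such a connection is exactly where extendibility must do real work, and the proposal does not supply it.

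Moreover, even $\mu$-cover and $\mu$-approximation for some $\mu<\kappa$ would not by themselves give a forcing of size $<\kappa$. Bukovsk\'y's theorem needs uniform covering, which is stronger than Hamkins' cover property, and it yields only a $\mu$-c.c.\ forcing.
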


Finally, Theorem \ref{GenTheorem} uses the following result of Shelah \cite{Shelah}. (See \cite{Cummings} for a short proof.)

\begin{Fact} \label{Shelah}
Let \(\kappa\) be a singular strong limit cardinal with \(\text{cf}(\kappa) > \omega\). Then there exists \(x \subseteq \kappa\) such that \(\mathcal{P}(\kappa) \subseteq HOD_x\) (where \(HOD_x\) denotes the class of all sets that are hereditarily definable from ordinals and \(x\)).
\end{Fact}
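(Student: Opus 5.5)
The plan is to follow the standard approach, via Cummings's short proof of Shelah's theorem. We code $\mathcal{P}(\kappa)$ into a family of functions from $\lambda := \text{cf}(\kappa)$ to $\kappa$, and then separate distinct members of that family by well-founded ranks modulo the nonstationary ideal on $\lambda$. The fixed parameter $x$ will carry all the small bookkeeping data.

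First I fix a continuous increasing sequence $\langle \kappa_i : i < \lambda \rangle$ cofinal in $\kappa$. Since $\kappa$ is a strong limit, $2^{\kappa_i} < \kappa$ for each $i$, so I may fix enumerations $e_i : 2^{\kappa_i} \to \mathcal{P}(\kappa_i)$. Also $2^{\lambda} < \kappa$, so I fix an enumeration $\langle S_j : j < 2^\lambda \rangle$ of the stationary subsets of $\lambda$. Let $x \subseteq \kappa$ code, via a pairing function, the sequence $\langle \kappa_i \rangle$, all the $e_i$, and all the $S_j$. This is possible because each of these objects is a subset of some ordinal below $\kappa$.

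For $A \subseteq \kappa$, let $f_A(i) := e_i^{-1}(A \cap \kappa_i) < \kappa$. The map $A \mapsto f_A$ is injective, and $A$ is definable from $x$ and $f_A$. Moreover, if $f_A(i) = f_B(i)$ then $f_A(k) = f_B(k)$ for all $k \le i$. Consequently, for $A \ne B$ the equality set of $f_A$ and $f_B$ is a bounded initial segment. The complementary tail, which contains a club, splits into $\{f_A < f_B\}$ and $\{f_B < f_A\}$, so at least one of these two sets is stationary.

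Next, for each stationary $S \subseteq \lambda$, the relation $f <_S g \iff \{i \in S : f(i) \ge g(i)\}$ is nonstationary is well-founded. The point is that $\text{cf}(\kappa) = \lambda > \omega$, so the club filter restricted to $S$ is countably complete, and a descending $\omega$-sequence would produce a descending sequence of ordinals on a stationary set. Let $\text{rk}_S$ denote the associated rank function. By the splitting observation above, $A \ne B$ implies $\text{rk}_{S}(f_A) \ne \text{rk}_{S}(f_B)$ for $S$ equal to whichever of $\{f_A < f_B\}, \{f_B < f_A\}$ is stationary. Hence $A$ is determined by the sequence $\sigma_A := \langle \text{rk}_{S_j}(f_A) : j < 2^\lambda \rangle$. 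The relations $<_S$ and the ranks are definable (in $V$) from $x$ and the index $j$, so $A$ is definable from $x$ and $\sigma_A$.

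The main obstacle is that $\sigma_A$ is a set of ordinals of size $2^\lambda$ rather than a single ordinal, so it is not automatically ordinal definable from $x$. I would first try to replace $\sigma_A$ by a single rank: choose a canonical well-order on sequences of ordinals of length $2^\lambda$ (e.g.\ lexicographic), and argue that the set $\{\sigma_A : A \subseteq \kappa\}$ is definable from $x$. Then $\sigma_A$ is the $\rho$-th element of it for some ordinal $\rho$, so $\sigma_A$, and hence $A$, is definable from $x$ and $\rho$. If the lexicographic order fails to be a well-order here, I would instead exploit the fact that the $f_A$ form a tree-like family (agreement at $i$ propagates downward). This should show that for fixed $S$ and fixed rank, the relevant $f_A$ are pinned down by finitely many further ordinal parameters. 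Since every element of $A \subseteq \kappa$ is an ordinal, $A \in OD_x$ already gives $A \in HOD_x$, yielding $\mathcal{P}(\kappa) \subseteq HOD_x$.
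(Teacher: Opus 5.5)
Your setup is sound. The tree-like coding $f_A$, the fact that distinct $f_A,f_B$ disagree on a tail, the well-foundedness of $<_S$ (via countable completeness of the nonstationary ideal on $\lambda=\text{cf}(\kappa)>\omega$), and the conclusion that $A$ is definable from $x$ and $\sigma_A$ are all correct. However, the step you flag as the main obstacle is the real content of Shelah's theorem, and neither of your remedies closes it. The lexicographic order on sequences of ordinals of infinite length is not well-founded: already $\langle 1,0,0,\dots\rangle >_{\text{lex}} \langle 0,1,0,\dots\rangle >_{\text{lex}} \cdots$. You give no reason it should be well-founded on $\{\sigma_A : A\subseteq\kappa\}$, and without that, ``the $\rho$-th element'' is undefined. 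Nor can $x$ simply enumerate the possible $\sigma_A$: there are $2^\kappa$ distinct $\sigma_A$ but $|H(\kappa)|=\kappa$, so most $\sigma_A$ have ranks unbounded in $\kappa$ and lie outside anything $x$ can enumerate. Your fallback (``pinned down by finitely many further ordinal parameters'') restates the goal without giving a mechanism.

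The missing ingredient is the rank calculus. For stationary $S\subseteq T$ you have $\text{rk}_S(f)\ge\text{rk}_T(f)$. You also have $\text{rk}_{S\cup T}(f)=\min(\text{rk}_S(f),\text{rk}_T(f))$; prove $\ge$ by induction on the minimum, gluing witnesses $g_1<_S f$ and $g_2<_T f$ into the function that is $\max(g_1,g_2)$ on $S\cap T$, $g_1$ on $S\setminus T$, and $g_2$ on $T\setminus S$. With this, record only $\gamma_A:=\text{rk}_\lambda(f_A)$ and $D_A:=\{S\subseteq\lambda \text{ stationary} : \text{rk}_S(f_A)=\gamma_A\}\in\mathcal{P}(\mathcal{P}(\lambda))$. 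Suppose $A\neq B$ and $\gamma_A=\gamma_B=\gamma$, and put $T=\{i: f_A(i)<f_B(i)\}$ and $T'=\{i : f_B(i)<f_A(i)\}$. Both are stationary, since otherwise one of $f_A,f_B$ is $<_\lambda$ the other. Also $T\cup T'$ is a tail, so $\gamma=\min(\text{rk}_T(f_A),\text{rk}_{T'}(f_A))$. Since $\gamma\le\text{rk}_{T'}(f_B)<\text{rk}_{T'}(f_A)$, you get $\text{rk}_T(f_A)=\gamma<\text{rk}_T(f_B)$, that is, $T\in D_A\setminus D_B$. So $A\mapsto(\gamma_A,D_A)$ is injective. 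Because $2^{2^\lambda}<\kappa$, you can additionally code an enumeration of $\mathcal{P}(\mathcal{P}(\lambda))$ into $x$. Then $A$ is definable from $x$ and two ordinals, so $A\in HOD_x$.
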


\section{Models of the form M[A]}
We will prove a general theorem, then give two applications.

\begin{Theorem} \label{MTheorem}
Let \(M\) be a definable inner model of ZFC. Let \(A\) and \(B\) be sets of ordinals. Suppose \(M[A] \subseteq M[B]\), \(M^{M[A]} = M = M^{M[B]}\), and \(M[B]\) has a real that dominates \(M[A]\). Then \(M[A] \not\equiv_{ON} M[B]\).
\end{Theorem}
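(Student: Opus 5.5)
The plan is to argue by contradiction. Assume \(M[A] \equiv_{ON} M[B]\), and use the ordinal parameters to define an ordinal rank that must drop strictly when we pass from \(M[B]\) to \(M[A]\). An ordinal statement cannot tell the two models apart, so the rank cannot drop, and this gives the contradiction. Hjorth's Fact \ref{Hjorth} is what guarantees that the rank is an honest ordinal, i.e. that a certain tree is well-founded.

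First fix the parameters. Choose \(\theta\) with \(A, B \subseteq \theta\), and a limit \(\xi\) large enough that \(A, B\) and the relevant reals appear in \(L_\xi[M,\cdot]\). In any model \(N\) of the form \(M[C]\) with \(M^{N} = M\), the class \(M\) is computed correctly because \(M\) is definable. So the following tree \(T^N\), defined in \(N\) from the ordinal parameters \(\theta, \xi\), has the same meaning in every such \(N\).

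A node of \(T^N\) is a finite sequence \(\langle (C_1, f_1), \dots, (C_k, f_k)\rangle\) satisfying:
\begin{itemize}
\item each \(C_i \subseteq \theta\) satisfies \(M^{M[C_i]} = M\);
\item \(M[C_{i+1}] \subseteq M[C_i]\), with \(C_0\) standing for \(N\) itself;
\item \(f_i \in \omega^\omega \cap M[C_{i-1}]\) and \(f_i\) dominates \(M[C_i]\).
\end{itemize}
To make the reals canonical, I would also require that each \((C_{i+1}, f_{i+1})\) be the least such pair in the well-order \(K(\triangleleft, C_i)\) of Fact \ref{KTheorem}. Here \(\triangleleft\) is a well-order of \(V_\xi^M\) that is quantified over in the statement and recorded as part of the node, so that no non-ordinal parameter appears. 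By Fact \ref{KTheorem}(2), \(K\) is absolute between transitive models containing \(\triangleleft\) and \(C\). Hence the whole tail of a node below level \(m\) is definable inside \(M[C_m]\) from \(\triangleleft\), \(C_m\) and ordinals.

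Second, I would show that \(T^{M[B]}\) is well-founded. Suppose instead there is an infinite branch \(\langle (C_i, f_i) : i \geq 1\rangle\). By the canonical-choice requirement, the sequence \(\langle f_n : n > m\rangle\) is definable in \(M[C_m]\), so \(L[\langle f_n : n > m\rangle] \subseteq M[C_m]\). Since \(f_m\) dominates \(M[C_m]\), it dominates \(L[\langle f_n : n>m\rangle]\), and Fact \ref{Hjorth} is contradicted. Well-foundedness of a set-sized tree is absolute, so it makes no difference whether the branch is sought in \(M[B]\) or in \(V\).

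Third, compare ranks. Let \(\rho_B\) be the rank of the root of \(T^{M[B]}\), and \(\rho_A\) the rank of the root of \(T^{M[A]}\). The hypotheses make \((A, f)\) a legitimate first node in \(M[B]\), where \(f\) is a real of \(M[B]\) dominating \(M[A]\); if the canonical-choice clause forces a different least pair, take that pair instead, and the argument is the same. The subtree of \(T^{M[B]}\) below this node is essentially \(T^{M[A]}\) computed in \(M[A]\), which uses \(M^{M[A]}=M\) and the absoluteness of \(K\). Therefore \(\rho_B > \rho_A\). But the statement ``the root of \(T(\theta,\xi)\) has rank \(\rho_B\)'' uses only ordinal parameters, so \(M[A] \equiv_{ON} M[B]\) gives \(\rho_A = \rho_B\), a contradiction.

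I expect the main obstacle to be the canonical-choice step. It has two jobs. It must make the tails \(\langle f_n : n>m\rangle\) lie in \(M[C_m]\), which Hjorth's fact needs. It must also keep the subtree below \((A,f)\) in \(M[B]\) genuinely identical to \(T^{M[A]}\). Both depend on choosing \(\triangleleft\) and \(\xi\) uniformly and on Fact \ref{KTheorem}(2). If the minimality clause causes trouble, my fallback is to drop it, and instead carry in each node a real \(g_i \in M[C_{i-1}]\) coding \(f_i\) together with \(g_{i+1}\). I would then run Claim 2 of Fact \ref{Hjorth} directly along the branch, using the strictly decreasing ordinals \(\omega_1^{CK}(g_i)\).
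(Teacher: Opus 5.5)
\textbf{There is a genuine gap: your step three fails because the canonical-choice clause is built into the definition of \(T^N\).}

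With that clause, once \(\triangleleft\) and the first pair are fixed, every later entry of a node is determined. So each node of length \(\geq 1\) has at most one child. By your step two, each length-one node then has finite rank, and therefore \(\rho^N \leq \omega\).

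In particular, the only child of \(\langle (A,f)\rangle\) in \(T^{M[B]}\) is the \(K(\triangleleft,A)\)-least admissible pair of \(M[A]\). The root of \(T^{M[A]}\), by contrast, has every admissible pair of \(M[A]\) as a child. So the subtree below \(\langle (A,f)\rangle\) is not a copy of \(T^{M[A]}\), and ``take that pair instead'' does not rescue the argument. The canonical successor of \(A\) has no reason to start one of the long chains that witness \(\rho_A\). All you get is \(\rho_B \geq \rho_A\): every first node of \(T^{M[A]}\) is also a first node of \(T^{M[B]}\), with the same canonical continuation. Strictness is out of reach; for instance, if \(\rho_A = \omega\) then \(\rho_B = \omega\) too.

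If you drop the clause, step three becomes correct: below \(\langle (A,f)\rangle\) you really see all of \(T^{M[A]}\), using only \(M^{M[A]} = M\). But then an arbitrary infinite branch has no reason to have its tails \(\langle f_n : n>m\rangle\) in \(M[C_m]\), so step two fails.

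Your fallback has the same trade-off. It does make the tree well-founded, via the decreasing \(\omega_1^{CK}(g_i)\). But \(g_1\) codes \(g_2\), which codes \(g_3\), and so on, so the first real chosen in \(M[B]\) pins down a single \(g_2 \in M[A]\). Below a first node you therefore see only the part of \(T^{M[A]}\) with one fixed first real. You again get \(\rho_B \geq \rho_A\), but not \(>\), when \(\rho_A\) is a limit not attained by any single real.

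\textbf{The missing idea is to keep the tree free and apply canonical choice only inside the well-foundedness proof, to the part of the tree that has no rank.} This is what the paper does. It uses relations \(R_\alpha\) on \(\mathcal{P}(\kappa)\), defined from \(\kappa\) alone:
\begin{itemize}
\item \(xR_0y\) iff \(y \in M[x]\);
\item \(xR_{\alpha+1}y\) iff additionally \(y \in \text{dom}(R_\alpha)\) and some real of \(M[x]\) dominates \(M[y]\);
\item at limits, take intersections.
\end{itemize}
Your step three then goes through as intended: if \(R_\alpha^{M[A]} \neq \emptyset\), then \(B \in \text{dom}(R_{\alpha+1}^{M[B]})\).

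Well-foundedness becomes the claim \(R_\infty = \emptyset\), and there the choice is made among elements of \(\text{dom}(R_\infty)\). Every \(x\) in it has some \(y \in \text{dom}(R_\infty) \cap M[x]\) together with a real of \(M[x]\) dominating \(M[y]\). Moreover, \(\text{dom}(R_\infty) \cap M[x]\) is computable inside \(M[x]\) from \(\kappa\) and \(\triangleleft\), via \(L_\xi[M,\cdot]\) and Fact \ref{KTheorem}. Take \((x_{n+1},d_n)\) to be the \(K(\triangleleft,x_n)\)-least such pair. The recursion never halts, and its tails lie in \(M[x_m]\), which contradicts Fact \ref{Hjorth}. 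Note also that \(\triangleleft\) is used only inside this proof and never has to enter the ordinal-definable object.
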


\begin{proof}
Take \(\kappa \geq \aleph_0\) with \(B \subseteq \kappa\). In \(M[A]\) and in \(M[B]\), define a sequence \(\langle R_{\alpha} : \alpha \in ON \rangle\) of binary relations on \(\mathcal{P}(\kappa)\) by recursion: \newline
\(xR_0y\) iff \(y \in M[x]\); \newline
\(xR_{\alpha + 1}y\) iff \(xR_{\alpha}y\), \(y \in \text{dom}(R_{\alpha})\), and \(\exists d \in \omega^{\omega} \cap M[x] \thinspace [d \text{ dominates } M[y]]\); \newline
and for limit \(\gamma\), \(xR_{\gamma}y\) iff \(\forall \alpha < \gamma \thinspace [xR_{\alpha}y]\). \newline

\noindent In \(M[A]\) and in \(M[B]\): \(\langle R_{\alpha} : \alpha \in ON \rangle\) is decreasing, so it eventually stabilizes by Replacement. Write \(R_{\infty}\) for the limit relation.

\begin{Claim}
In \(M[A]\) and in \(M[B]\): \(R_{\infty} = \emptyset\).
\end{Claim}

\noindent \textit{Proof of Claim.} Take the class function \(K\) from Fact \ref{KTheorem}. For each \(x \subseteq \kappa\), let \(\alpha_x \in ON\) be least so that \((\mathcal{P}(\kappa) \times \omega^{\omega}) \cap M[x] \subseteq L_{\alpha_x}[M, x]\). Let \(\xi\) be the first limit ordinal above \(\text{sup}\{\alpha_x : x \subseteq \kappa\}\). Take a well-order \(\triangleleft \in M\) of \(V_{\xi}^M\). \newline

\noindent Then for each \(x \subseteq \kappa\): \(K(\triangleleft, x)\) is absolute for transitive models of ZF, and \(K(\triangleleft, x)\) is a well-order of \(L_{\xi}[M, x]\), so (by our choice of \(\xi\)) \(K(\triangleleft, x)\) well-orders \((\mathcal{P}(\kappa) \times \omega^{\omega}) \cap M[x] \subseteq L_{\xi}[M, x]\). \newline

\noindent In particular, \((\mathcal{P}(\kappa) \times \omega^{\omega}) \cap M[x]\) is definable from \(\kappa, \triangleleft \in M\) and \(x \subseteq \kappa\). \newline
So in each model \(M[z]\), \(\langle R_{\alpha} \cap M[z] : \alpha \in ON \rangle\) is definable from \(\kappa, \triangleleft \in M\). \newline

\noindent Also, \(\forall x \in \text{dom}(R_{\infty}) \thinspace \exists y \in \text{dom}(R_{\infty}) \cap M[x] \thinspace \exists d \in \omega^{\omega} \cap M[x] \thinspace [d \text{ dominates } M[y]]\). \newline
So \(\forall x \in \text{dom}(R_{\infty}) \thinspace [M[x] \models ``\exists y \in \text{dom}(R_{\infty}) \thinspace \exists d \in \omega^{\omega} \thinspace [d \text{ dominates } M[y]]"]\), \newline
where the ``\(R_{\infty}\)" and the ``\(M[y]\)" after the \(\models\) are defined from \(\kappa, \triangleleft \in M\). \newline

\noindent Toward a contradiction, assume \(R_{\infty} \neq \emptyset\). Take \(x_0 \in \text{dom}(R_{\infty})\). \newline

\noindent Define \(\langle x_n, d_n : n \in \omega \rangle\) by recursion: given \(x_n \in \text{dom}(R_{\infty})\), \newline
let \((x_{n + 1}, d_n) \in (\mathcal{P}(\kappa) \times \omega^{\omega}) \cap M[x_n]\) be the \(K(\triangleleft, x_n)\)-least pair such that \newline
\(M[x_n] \models ``x_{n+1} \in \text{dom}(R_{\infty}) \text{ and } d_n \text{ dominates } M[x_{n + 1}]"\), \newline
where the ``\(R_{\infty}\)" and the ``\(M[x_{n + 1}]\)" after the \(\models\) are defined from \(\kappa, \triangleleft \in M\). \newline

\noindent In the end, \(\forall n \in \omega \thinspace [M[x_n] \supseteq M[x_{n+1}]]\); \(\forall n \in \omega \thinspace [d_n \text{ dominates } M[x_{n+1}]]\); \newline
and \(\forall m \in \omega \thinspace [\langle x_n, d_n : n \geq m \rangle \in M[x_m]]\). This contradicts Fact \ref{Hjorth}. \hfill \qedsymbol \newline

\noindent Let \(x := B\). By induction, we have \(\forall \alpha \in ON \thinspace [R_{\alpha}^{M[A]} \subseteq R_{\alpha}^{M[B]}]\) and \newline
\(\forall \alpha \in ON \thinspace \forall y \in \text{dom}(R_{\alpha}^{M[A]}) \thinspace [xR_{\alpha}^{M[B]}y]\). \newline

\noindent Take \(d \in \omega^{\omega} \cap M[B]\) so that \(d\) dominates \(M[A]\). Toward a contradiction, assume \newline
\(M[A] \equiv_{ON} M[B]\). We will contradict the claim by showing \(\forall \alpha \in ON \thinspace [x \in \text{dom}(R_{\alpha}^{M[B]})]\). \newline

\noindent The base case and the limit case are clear. \newline
For the successor case, let \(\alpha \in ON\), and suppose \(x \in \text{dom}(R_{\alpha}^{M[B]})\). Since \(M[A] \equiv_{ON} M[B]\), take \(y \in \text{dom}(R_{\alpha}^{M[A]})\). Then \(xR_{\alpha}^{M[B]}y\); \(y \in \text{dom}(R_{\alpha}^{M[B]})\); \(d \in \omega^{\omega} \cap M[x]\); and \(d \text{ dominates } \newline
M[y] \subseteq M[A]\). So \(xR_{\alpha + 1}^{M[B]}y\), so \(x \in \text{dom}(R_{\alpha + 1}^{M[B]})\).
\end{proof}

Our proof of Theorem \ref{MTheorem} actually shows that \(M[A]\) and \(M[B]\) disagree on the first \(\alpha\) satisfying \(R_{\alpha} = \emptyset\); more specifically, the first such \(\alpha\) in \(M[A]\) is less than the first such \(\alpha\) in \(M[B]\).

\begin{Corollary} \label{LTheorem}
Let \(A\) and \(B\) be sets of ordinals. Suppose \(L[A] \subseteq L[B]\) and \(L[B]\) has a real that dominates \(L[A]\). Then \(L[A] \not\equiv_{ON} L[B]\).
\end{Corollary}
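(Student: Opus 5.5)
This follows from Theorem \ref{MTheorem} with \(M := L\). The plan is to verify the theorem's hypotheses for this choice of \(M\), after first identifying the models \(L[A]\) and \(L[B]\) of the corollary with the models \(L[A]\) and \(L[B]\) given by the construction of \(M[x]\) in Section 2.

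First, \(L\) is a definable inner model of ZFC. Next, for any set of ordinals \(X\), the model \(M[X]\) with \(M = L\) is the usual relative constructible universe \(L[X]\):

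\begin{itemize}
\item By Fact \ref{MDef}(7), \(L[X]\) in the sense of Section 2 is an inner model of ZF containing \(L \cup \{X\}\). It therefore contains the usual \(L[X]\), which is the least inner model of ZF containing \(X\).
\item Conversely, the usual \(L[X]\) is an inner model of ZF containing \(L \cup \{X\}\). So by Fact \ref{MDef}(8) it contains \(L[X]\) in the sense of Section 2.
\end{itemize}

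So the two notions coincide. This lets us read the hypotheses \(L[A] \subseteq L[B]\) and ``\(L[B]\) has a real that dominates \(L[A]\)'' as exactly the corresponding hypotheses of Theorem \ref{MTheorem}.

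The remaining hypothesis is \(M^{M[A]} = M = M^{M[B]}\), that is, \(L^{L[A]} = L = L^{L[B]}\). This holds because \(L^N = L\) for every inner model \(N\) of ZF, by the absoluteness of the constructible hierarchy. Both \(L[A]\) and \(L[B]\) are inner models of ZF (indeed of ZFC, by Fact \ref{KTheorem}), so the hypothesis is satisfied.

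Applying Theorem \ref{MTheorem} now gives \(L[A] \not\equiv_{ON} L[B]\). The only point that needs any care is the identification of the two versions of \(L[X]\). Its minimality argument is immediate from Fact \ref{MDef}(7) and (8), so I expect no real obstacle.
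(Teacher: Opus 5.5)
Your proposal is correct and is the paper's proof: the paper likewise notes that \(L^{L[A]} = L = L^{L[B]}\) and applies Theorem \ref{MTheorem} with \(M = L\). Your extra check, via Fact \ref{MDef}(7) and (8), that the Section 2 construction \(L[X]\) coincides with the usual relative constructible universe is valid and makes explicit an identification the paper leaves implicit.
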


\begin{proof}
Since \(L^{L[A]} = L = L^{L[B]}\), this follows directly from Theorem \ref{MTheorem}.
\end{proof}

In fact, by the remark before Corollary \ref{LTheorem}: for each \(\kappa \geq \aleph_0\), the tree \newline
\(T(\kappa) := \{t \in \mathcal{P}(\kappa)^{<\omega} : \text{each } L[t(n)] \text{ is a dominating extension of } L[t(n + 1)]\}\) is well-founded. (If there were an infinite branch \(b \in \mathcal{P}(\kappa)^{\omega}\) through \(T(\kappa)\), then letting \(\alpha_n\) denote the least ordinal such that \(L[b(n)] \models ``R_{\alpha_n} = \emptyset"\), the sequence \(\langle \alpha_n : n \in \omega \rangle\) would be strictly decreasing.)

\begin{Corollary} \label{MantleTheorem}
Suppose there is an extendible cardinal. Let \(\mathbb{P}\) be a forcing that adds a dominating real. Then \(\mathbb{P}\) changes the ordinal theory.
\end{Corollary}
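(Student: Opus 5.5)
We will derive Corollary \ref{MantleTheorem} from Theorem \ref{MTheorem}, taking \(M\) to be the mantle. Let \(G\) be \(\mathbb{P}\)-generic over \(V\), and suppose toward a contradiction that \(V \equiv_{ON} V[G]\). Here ``\(\mathbb{P}\) changes the ordinal theory'' means that some formula with ordinal parameters is forced by \(1_{\mathbb{P}}\) to differ from its truth value in \(V\). If \(\mathbb{P}\) is not homogeneous, we may instead work below a condition, or argue in a single generic extension. We will show that \(V\) and \(V[G]\) disagree on some formula with ordinal parameters.

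\textbf{Step 1: setting up \(M\).} Let \(M\) be the mantle of \(V\). By Fact \ref{Usuba}, \(M\) is a definable inner model of ZFC and the mantle is forcing invariant, so \(M^{V[G]} = M\). Since there is an extendible cardinal, Fact \ref{Usuba} also says that \(M\) is a ground of \(V\): there are \(\mathbb{Q} \in M\) and an \(M\)-generic \(H\) with \(V = M[H]\). Then \(V[G]\) is also a generic extension of \(M\), by the two-step iteration \(\mathbb{Q} * \dot{\mathbb{P}}\).

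\textbf{Step 2: coding by sets of ordinals.} Code \(H\) by a set of ordinals \(A\) (using a well-order of \(\mathbb{Q}\) in \(M\)), so that \(V = M[H] = M[A]\) in the sense of the construction before Fact \ref{MDef}. By Fact \ref{MDef}(8), \(M[A]\) is the least inner model of ZF containing \(M \cup \{A\}\), and this coincides with the forcing extension \(M[H]\). Similarly code \(H * G\) by a set of ordinals \(B\), chosen so that \(A\) is recoverable from \(B\), for example \(B\) codes the pair \((A, \text{code of } G)\). Then \(V[G] = M[B]\) and \(M[A] \subseteq M[B]\).

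\textbf{Step 3: checking the hypotheses of Theorem \ref{MTheorem}.}
\begin{enumerate}[(a)]
\item \(M^{M[A]} = M^V = M\), since \(M\) is the mantle of \(V\).
\item \(M^{M[B]} = M^{V[G]} = M\), by forcing invariance.
\item \(M[B] = V[G]\) contains a real dominating every real of \(V = M[A]\), since \(\mathbb{P}\) adds a dominating real.
\end{enumerate}
Theorem \ref{MTheorem} then gives \(M[A] \not\equiv_{ON} M[B]\), that is, \(V \not\equiv_{ON} V[G]\). So some formula with ordinal parameters has different truth values in \(V\) and \(V[G]\), which is what it means for \(\mathbb{P}\) to change the ordinal theory.

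The main obstacle is the bookkeeping in Step 2 together with the uniformity needed in (a) and (b). One must check that ``\(M\)'' in Theorem \ref{MTheorem} is the same defining formula evaluated in both models, namely the Fuchs--Hamkins--Reitz definition of the mantle. One must also check that the identification of \(M[A]\) with the forcing extension \(M[H]\) holds. This follows because any inner model of ZF containing \(M\) and \(H\) contains \(M[H]\), and conversely \(M[H]\) satisfies ZF and contains \(M \cup \{A\}\).
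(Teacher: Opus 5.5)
Your proposal is correct and follows the paper's proof essentially step for step. You take \(M\) to be the mantle, use Fact \ref{Usuba} to realize \(V=M[A]\) and \(V[G]=M[B]\) with \(A,B\) sets of ordinals, get \(M^{M[A]}=M=M^{M[B]}\) from the definition of the mantle and forcing invariance, and apply Theorem \ref{MTheorem}.

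The differences are cosmetic. You code the generic through a well-order of \(\mathbb{Q}\) in \(M\) rather than through \(\text{trcl}(\{G\})\). You are also more explicit than the paper about two points: that the forcing extension \(M[H]\) coincides with the paper's \(M[A]\), and that the mantle's definition is parameter-free.
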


\begin{proof}
Let \(M\) denote the mantle. By assumption and Fact \ref{Usuba}, take a set \(G\) with \(M[G] = V\). Code \(G\) as a set of ordinals, in the following standard way. \newline

\noindent Take a cardinal \(\lambda\) and a bijection \(f : \text{trcl}(\{G\}) \rightarrow \lambda\). \newline
Let \(E \subseteq \lambda \times \lambda\) be the unique relation such that \(f : \langle \text{trcl}(\{G\}), \in \rangle \cong \langle \lambda, E \rangle\). \newline
Take an ordinal pairing function \(g : \lambda \times \lambda \rightarrow \lambda\). Finally, let \(A := g''E \subseteq \lambda\). \newline

\noindent Then we can recover \(G\) from \(A\) as follows: we recover first \(\langle \lambda, E \rangle\), then (using the transitive collapse) \(\text{trcl}(\{G\})\), then \(G\). So \(M[G] = V = M[A]\). \newline

\noindent Let \(H\) be \(\mathbb{P}\)-generic over \(V\). Using the same method as above, code \((G, H)\) as a set \(B\) of ordinals. Then \(V[H] = M[B]\). Finally, \(M[A] \subseteq M[B]\); by Fact \ref{Usuba}, \(M\) is the mantle of both \(M[A]\) and \(M[B]\); and \(M[B]\) has a real that dominates \(M[A]\). So by Theorem \ref{MTheorem}, \(M[A] \not\equiv_{ON} M[B]\).
\end{proof}

\section{Definable cone homogeneous forcings}
We will show that the ordinal theory is changed by any definable cone homogeneous forcing that adds a dominating real. Since we plan to apply Fact \ref{Shelah}, we work with models of the form \(HOD_x\).

To deal with the non-absoluteness of \(HOD_x\), we use the iteration \(\mathbb{P}^n\). Informally, \(\mathbb{P}^n\) denotes \(\mathbb{P} \ast ... \ast \mathbb{P}\) (with \(n\) \(\mathbb{P}\)'s). Formally, \(\mathbb{P}^n\) is defined for each \(n \in \omega\) as follows.

\begin{Definition} Let \(\mathbb{P}\) be a definable cone homogeneous forcing that doesn't change the ordinal theory. Let \(F\) denote the usual surjection from \(ON\) to \(OD\). Let \(\xi \in ON\) be least with \(\mathbb{P} = F(\xi)\). Let \(\mathbb{P}^0\) be trivial. Let \(\mathbb{P}^1 := \mathbb{P}\). \newline

\noindent Let \(n \geq 1\), and suppose \(\mathbb{P}^n\) has been defined. Then let \newline
\(\mathring{\mathbb{Q}}_n := \bigcup \{\sigma : \sigma \text{ is a } \mathbb{P}^n \text{-name of minimum rank such that } 1_{\mathbb{P}^n} \Vdash ``\sigma = F(\check{\xi})"\}\) \newline
(so we have \(1_{\mathbb{P}^n} \Vdash ``\mathring{\mathbb{Q}}_n = F(\check{\xi})"\)); let \(\mathring{\mathbb{R}}_n\) be the trivial \(\mathbb{P}^n\)-name for a forcing; \newline
and let \(\mathbb{P}^{n + 1} := \begin{cases} 
\mathbb{P}^n \ast \mathring{\mathbb{Q}}_n & \text{if } 1_{\mathbb{P}^n} \Vdash ``F(\check{\xi}) \text{ is a cone homogeneous forcing}" \\
\mathbb{P}^n \ast \mathring{\mathbb{R}}_n & \text{otherwise.}
\end{cases}
\)
\end{Definition}

Note that \(\forall n \in \omega \thinspace [1_{\mathbb{P}^n} \Vdash ``F(\check{\xi}) \text{ is a cone homogeneous forcing}"]\). \newline
(Otherwise, \(V\) and its \(\mathbb{P}\)-generic extension would disagree on the first \(n \in \omega\) such that \newline \(1_{\mathbb{P}^n} \nVdash ``F(\check{\xi}) \text{ is a cone homogeneous forcing}"\), so \(\mathbb{P}\) would change the ordinal theory.)

So \(\forall n \geq 1 \thinspace [\mathbb{P}^{n + 1} = \mathbb{P}^n \ast \mathring{\mathbb{Q}}_n]\), and the trivial names \(\mathring{\mathbb{R}}_n\) are never used.

\begin{Definition} Let \(\mathbb{P}\) be a forcing. Let \(x\) be a set of ordinals. \newline
If \(1_{\mathbb{P}} \Vdash ``HOD_x^V = HOD_x"\), then we say that \(\mathbb{P}\) {\normalfont preserves} \(HOD_x\).
\end{Definition}

\begin{Lemma} \label{PreservesLemma} Let \(\mathbb{P}\) be a definable cone homogeneous forcing that doesn't change the ordinal theory. Let \(a := \langle \mathcal{P}(\lvert \mathbb{P}^n \rvert) : n \in \omega \rangle\). Let \(x\) be a set of ordinals. Suppose \newline
\(\forall n \in \omega \thinspace [1_{\mathbb{P}^n} \Vdash ``\check{a} \in OD_x"]\). Then \(\forall n \in \omega \thinspace [\mathbb{P}^n \text{ preserves } HOD_x]\).
\end{Lemma}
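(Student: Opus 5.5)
We argue by induction on \(n\), proving the two inclusions \(HOD_x^V \subseteq HOD_x^{V[G]}\) and \(HOD_x^{V[G]} \subseteq HOD_x^V\) separately. Throughout, \(G\) is \(\mathbb{P}^n\)-generic over \(V\). For \(n \geq 1\) we write \(G = G_{n-1} \ast H\), where \(G_{n-1}\) is \(\mathbb{P}^{n-1}\)-generic over \(V\) and \(H\) is generic over \(V[G_{n-1}]\) for \(\mathring{\mathbb{Q}}_{n-1}^{G_{n-1}} = F(\xi)^{V[G_{n-1}]}\). The case \(n=0\) is trivial.

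\textbf{The inclusion \(HOD_x^V \subseteq HOD_x^{V[G]}\).} We use Fact \ref{LaverWoodin} applied to \(\mathbb{P}^n\).
\begin{enumerate}[(i)]
\item By hypothesis, \(V[G] \models ``a \in OD_x"\). The \(n\)-th coordinate of \(a\) is \(\mathcal{P}(\lvert \mathbb{P}^n \rvert)^V\), so it is \(OD_x\) in \(V[G]\) as well, since \(n\) is an ordinal.
\item Hence the ground \(V\) is a class of \(V[G]\) definable from \(x\) and ordinals.
\item Let \(y\) be \(OD_x\) in \(V\), say as the unique \(y\) with \(V \models \psi(y, x, \vec\beta)\). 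Relativizing \(\psi\) to the definition of \(V\) inside \(V[G]\) shows that \(y\) is \(OD_x\) in \(V[G]\).
\item If \(y \in HOD_x^V\), then every element of \(\text{trcl}(\{y\})\) is \(OD_x^V\), hence \(OD_x^{V[G]}\). Therefore \(y \in HOD_x^{V[G]}\).
\end{enumerate}

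\textbf{The inclusion \(HOD_x^{V[G]} \subseteq HOD_x^V\).} First we show that every set of ordinals \(X\) that is \(OD_x\) in \(V[G]\) lies in \(HOD_x^V\).
\begin{enumerate}[(i)]
\item Write \(X = \{\alpha : V[G] \models \varphi(\alpha, \vec\beta, x)\}\), and work over \(W := V[G_{n-1}]\).
\item By the note following the definition of \(\mathbb{P}^n\), \(F(\xi)^W\) is a cone homogeneous forcing in \(W\). It is definable in \(W\) from the ordinal \(\xi\), and \(x \in W\) has a check name.
\item By the cone homogeneity property recalled before Lemma \ref{PreservesLemma}, \(1\) decides each \(\varphi(\check\alpha, \check{\vec\beta}, \check x)\). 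Hence
\[X = \{\alpha : W \models ``1_{F(\xi)} \Vdash \varphi(\check\alpha, \check{\vec\beta}, \check x)"\}.\]
So \(X \in W\) and \(X\) is \(OD_x\) in \(W\).
\item A set of ordinals that is \(OD_x\) is in \(HOD_x\). So \(X \in HOD_x^{W}\), and \(HOD_x^W = HOD_x^V\) by the induction hypothesis for \(n-1\).
\end{enumerate}
The hypothesis of the lemma is stated for all \(n\), so the induction hypothesis is available. Notice that this step needs only the single-step homogeneity of \(F(\xi)\) over \(W\), not cone homogeneity of the whole iteration \(\mathbb{P}^n\).

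Now let \(y \in HOD_x^{V[G]}\). Since \(HOD_x^{V[G]} \models \text{ZFC}\), we can code \(\text{trcl}(\{y\})\) inside \(HOD_x^{V[G]}\) by a set of ordinals \(c\), as in the proof of Corollary \ref{MantleTheorem}. Then \(c\) is \(OD_x\) in \(V[G]\), so \(c \in HOD_x^V\) by the previous step. Decoding \(c\) (transitive collapse of the coded relation) is absolute, so \(y \in HOD_x^V\).

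\textbf{Main obstacle.} The delicate point is the homogeneity step. We cannot directly appeal to cone homogeneity of \(\mathbb{P}^n\), since an iteration of cone homogeneous forcings need not obviously be cone homogeneous. The plan is to avoid this entirely by peeling off one factor at a time and using the induction hypothesis \(HOD_x^{V[G_{n-1}]} = HOD_x^V\). To make this work, one must check two things. First, the iterand \(\mathring{\mathbb{Q}}_{n-1}^{G_{n-1}}\) really equals the \(OD\) forcing \(F(\xi)^{V[G_{n-1}]}\); this is arranged by the definition of \(\mathbb{P}^n\). Second, the forcing relation over \(W\) for \(F(\xi)^W\) is definable in \(W\) from \(\xi\).
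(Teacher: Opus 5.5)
Your proposal is correct and follows the paper's proof. Both argue by induction on \(n\): the inclusion \(HOD_x^{V[G]} \subseteq HOD_x^V\) comes from cone homogeneity of the last iterand \(F(\xi)\) over \(V[G_{n-1}]\) together with the induction hypothesis, and the inclusion \(HOD_x^V \subseteq HOD_x^{V[G]}\) comes from ground model definability (Fact \ref{LaverWoodin}) via the relevant coordinate of \(a\). You also spell out the reduction to sets of ordinals and the coding step, which the paper compresses into a single line.
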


\begin{proof}
By induction on \(n \in \omega\). Let \(K\) be \(\mathbb{P}^{n+1}\)-generic over \(V\). Take \(G\) and \(H\) so that \(G\) is \(\mathbb{P}^n\)-generic over \(V\); \(H\) is \(\mathbb{P}\)-generic over \(V[G]\) (where \(``\mathbb{P}"\) is interpreted in \(V[G]\)); and \(V[G][H] = V[K]\). \newline

\noindent Then \(HOD_x^V =\) (by induction) \(HOD_x^{V[G]} \supseteq\) (since \(V[G]\) satisfies ``\(\mathbb{P}\) is cone homogeneous") \(HOD_x^{V[G][H]} = HOD_x^{V[K]}\). Conversely, \(a\) is definable in \(V[K]\) from ordinals and \(x\), so by Fact \ref{LaverWoodin}, \(V\) is definable in \(V[K]\) from ordinals and \(x\); so \(HOD_x^V \subseteq HOD_x^{V[K]}\).
\end{proof}

\begin{Theorem} \label{GenTheorem}
Let \(\mathbb{P}\) be a definable cone homogeneous forcing that adds a dominating real. Then \(\mathbb{P}\) changes the ordinal theory.
\end{Theorem}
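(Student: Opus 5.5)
Suppose toward a contradiction that \(\mathbb{P}\) is a definable cone homogeneous forcing that adds a dominating real but does not change the ordinal theory. Then the iterations \(\mathbb{P}^n\) are defined and \(\mathbb{P}^{n+1} = \mathbb{P}^n \ast \mathring{\mathbb{Q}}_n\) for \(n \geq 1\). The plan is to build an infinite sequence of models \(HOD_{x}\) computed in \(V[G_0] \subseteq V[G_1] \subseteq \cdots\), where \(G_n\) is \(\mathbb{P}^n\)-generic. Reading this sequence backwards yields reals \(f_m\) with each \(f_m\) dominating \(L[\langle f_n : n > m\rangle]\), contradicting Fact \ref{Hjorth}.

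The first step is to secure a single set of ordinals \(x\) satisfying the hypothesis of Lemma \ref{PreservesLemma}, using Fact \ref{Shelah}. Choose a singular strong limit \(\kappa\) of uncountable cofinality, large enough that \(a := \langle \mathcal{P}(\lvert \mathbb{P}^n \rvert) : n \in \omega \rangle \subseteq V_\kappa\) and each \(\mathbb{P}^n\) is small relative to \(\kappa\). Such a \(\kappa\) remains a singular strong limit of uncountable cofinality in every \(V[G_n]\). Fact \ref{Shelah} in \(V\) gives \(x \subseteq \kappa\) with \(\mathcal{P}(\kappa) \subseteq HOD_x\), so \(a\), coded as a subset of \(\kappa\), lies in \(OD_x\) in \(V\). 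The delicate point is to verify \(1_{\mathbb{P}^n} \Vdash \check a \in OD_x\). Here I would argue that in \(V[G_n]\), \(V\) is definable from \(a\) (Fact \ref{LaverWoodin}), while \(a\) is a specific subset of \(\kappa\) that is ordinal definable from \(x\) in \(V\). To get this in \(V[G_n]\), I would instead choose \(x\) so that \(a\) is definable from \(x\) outright, for instance by coding \(a\) into \(x\) (replace \(x\) by \(x \oplus a'\) where \(a'\) codes \(a\)). Then \(a \in OD_x\) trivially in every extension. Lemma \ref{PreservesLemma} then gives \(HOD_x^{V[G_n]} = HOD_x^V\) for all \(n\).

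Next I exploit Shelah in the extensions. In \(V[G_n]\) we still have \(\kappa\) singular strong limit with \(\text{cf}(\kappa) > \omega\), so there is \(x_n \subseteq \kappa\) with \(\mathcal{P}(\kappa)^{V[G_n]} \subseteq HOD_{x_n}^{V[G_n]}\); in particular the generic reals lie there. Since \(\mathbb{P}\) does not change the ordinal theory, the statement ``there exists \(y \subseteq \kappa\) such that \(\mathcal{P}(\kappa) \subseteq HOD_{y}\) and \(\Phi\)'' transfers between \(V\) and \(V[G_1]\). I would make this an ordinal-parameter statement about the least such \(y\) in a definable well-order, which requires \(HOD\)-type definability. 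The goal is a uniform ordinal-definable recursion, analogous to the relations \(R_\alpha\) of Theorem \ref{MTheorem} but with \(M[y]\) replaced by \(HOD_y\). Concretely, I would define \(R_\alpha\) on \(\mathcal{P}(\kappa)\) by: \(y R_{\alpha+1} z\) iff \(y R_\alpha z\), \(z \in \text{dom}(R_\alpha)\), \(z \in HOD_y\), and \(HOD_y\) contains a real dominating \(HOD_z\). I would show \(R_\infty = \emptyset\) via Fact \ref{Hjorth}, exactly as in the Claim of Theorem \ref{MTheorem}, using the definable well-order of \(HOD_y\) from \(y\) to choose witnesses canonically. Finally, the generic dominating real together with ordinal-theory invariance pushes a witness \(y\) into every \(\text{dom}(R_\alpha)\), mirroring the final induction of Theorem \ref{MTheorem}. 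For the latter, \(y\) in \(V[G_1]\) codes \(\mathcal{P}(\kappa)^{V[G_1]}\) and \(z\) witnesses in \(V\) with \(HOD_z \subseteq V\), so the new dominating real dominates \(HOD_z\).

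The main obstacle is the non-absoluteness of \(HOD_y\). To run the Hjorth-style argument, I need \(HOD_z\) computed in \(V\) to agree with \(HOD_z\) computed in \(V[G_1]\), and the domination relation between \(HOD\)'s to be evaluated consistently across the two models. This is exactly where Lemma \ref{PreservesLemma} and the iterations \(\mathbb{P}^n\) enter. I would restrict \(R_\alpha\) to those \(y\) that code \(a\), so that the \(HOD_y\) used are preserved by all \(\mathbb{P}^n\). The inductive step then compares \(HOD\)'s inside \(V[G_n]\) and \(V[G_{n+1}]\), using cone homogeneity and the fact that \(\mathbb{P}\) does not change the ordinal theory to transfer the ordinal-definable statement \(x \in \text{dom}(R_\alpha)\) level by level.
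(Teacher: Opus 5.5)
Your \(R_\alpha\) is the paper's \(S_\alpha\), your Hjorth argument for \(R_\infty=\emptyset\) matches the paper's Claim, and your final witness (a Shelah set chosen in the extension) is the right one. The gap is in the \emph{field} of the relation, which is what makes the comparison of \(V\) with \(V[G]\) work. Take the witness \(z\in\text{dom}(R^V_\alpha)\) supplied by \(V\equiv_{ON}V[G]\). The final induction needs two things:
\begin{enumerate}
\item \(z\) lies in the field of \(R^{V[G]}\);
\item \(R^V_\alpha\subseteq R^{V[G]}_\alpha\), which in turn needs \(HOD^V_z\subseteq HOD^{V[G]}_z\). This is used for the clause \(z'\in HOD_z\), and to keep a dominating real of \(HOD^V_z\) inside \(HOD^{V[G]}_z\).
\end{enumerate}
Your restriction to sets that ``code \(a\)'' breaks the first requirement. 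The relation is given by the same ordinal-parameter formula in both models only if \(a\) is recomputed in each. But \(a^V=\langle\mathcal P(\lvert\mathbb P^n\rvert)^V:n\in\omega\rangle\neq a^{V[G]}\); already \(\mathcal P(\omega)\) grows. So elements of the \(V\)-field code \(a^V\), and under any fixed decoding they are not in the field of \(R^{V[G]}\). The step from \(z\in\text{dom}(R^V_\alpha)\) to \(x\,R^{V[G]}_\alpha\,z\) therefore fails. Freezing \(a:=a^V\) as a parameter does not help: it is not an ordinal, so \(V\equiv_{ON}V[G]\) says nothing about formulas mentioning it. Dropping the restriction on the \(V\) side loses \(HOD^V_z\subseteq HOD^{V[G]}_z\).

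The missing idea is to define the field by preservation itself:
\(B(\kappa)=\{y\subseteq\kappa:\forall n\in\omega\,[\mathbb P^n\text{ preserves }HOD_y]\}\).
\begin{itemize}
\item It is ordinal definable.
\item It gives \(HOD^V_y=HOD^{V[G]}_y\) for \(y\in B(\kappa)^V\).
\item It is upward monotone, \(B(\kappa)^V\subseteq B(\kappa)^{V[G]}\). This holds because \(\mathbb P^{n+1}\) over \(V\) factors as \(\mathbb P\) followed by \((\mathbb P^n)^{V[G]}\), so preservation by \(\mathbb P^{n+1}\) in \(V\) gives preservation by \(\mathbb P^n\) in \(V[G]\).
\end{itemize}
This index shift is why all the iterates \(\mathbb P^n\) are needed. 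It has nothing to do with a tower \(V[G_0]\subseteq V[G_1]\subseteq\cdots\) read backwards. A forward tower has no last stage, so it does not produce reals \(f_m\) dominating \(L[\langle f_n:n>m\rangle]\). Fact \ref{Hjorth} is only ever applied inside a single model, via the canonical choice of witnesses. The coding of \(a\) and Lemma \ref{PreservesLemma} are then used exactly once, in \(V[G]\). There they place the Shelah witness \(x\) in \(B(\kappa)^{V[G]}\), where \(\mathcal P(\kappa)^{V[G]}\subseteq HOD^{V[G]}_x\) and \(a^{V[G]}\) is coded into \(x\). Finally, the \(x\) you build in \(V\) in your second paragraph cannot serve in the last step. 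Its \(HOD_x\) is preserved, so \(HOD^{V[G]}_x=HOD^V_x\), which contains no real dominating \(V\).
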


\begin{proof}
Toward a contradiction, assume that \(\mathbb{P}\) doesn't change the ordinal theory. Let \(G\) be \(\mathbb{P}\)-generic over \(V\). \newline

\noindent In \(V[G]\): Code \(a := \langle \mathcal{P}(\lvert \mathbb{P}^n \rvert) : n \in \omega \rangle\) as a set \(A \subseteq \lambda\) where \(\lambda\) is a cardinal, as in the proof of Corollary \ref{MantleTheorem}. Take a singular strong limit cardinal \(\kappa > \lambda\) with \(\text{cf}(\kappa) > \omega\). \newline

\noindent In \(V\) and in \(V[G]\), define a sequence \(\langle S_{\alpha} : \alpha \in ON \rangle\) of binary relations on the set \newline
\(B(\kappa) := \{x \subseteq \kappa : \forall n \in \omega \thinspace [\mathbb{P}^n \text{ preserves } HOD_x]\}\) by recursion: \newline
\(xS_0y\) iff \(y \in HOD_x\); \newline
\(xS_{\alpha + 1}y\) iff \(xS_{\alpha}y\), \(y \in \text{dom}(S_{\alpha})\), and \(\exists d \in \omega^{\omega} \cap HOD_x \thinspace [d \text{ dominates } HOD_y]\); \newline
and for limit \(\gamma\), \(xS_{\gamma}y\) iff \(\forall \alpha < \gamma \thinspace [xS_{\alpha}y]\). \newline

\noindent In \(V\) and in \(V[G]\): \(\langle S_{\alpha} : \alpha \in ON \rangle\) is decreasing, so it eventually stabilizes by Replacement. Write \(S_{\infty}\) for the limit relation.

\begin{Claim}
In \(V\) and in \(V[G]\): \(S_{\infty} = \emptyset\).
\end{Claim}

\noindent \textit{Proof of Claim.} For each \(x \subseteq \kappa\), let \(<_x\) denote the usual well-order of \(HOD_x\) that is definable from \(x\). Note that \(\forall x \in \text{dom}(S_{\infty}) \thinspace \exists y \in \text{dom}(S_{\infty}) \cap HOD_x \thinspace \exists d \in \omega^{\omega} \cap HOD_x \thinspace [d \text{ dominates } HOD_y]\). \newline

\noindent Toward a contradiction, assume \(S_{\infty} \neq \emptyset\). Take \(x_0 \in \text{dom}(S_{\infty})\). \newline

\noindent Define \(\langle x_n, d_n : n \in \omega \rangle\) by recursion: given \(x_n \in \text{dom}(S_{\infty})\), \newline
let \((x_{n+1}, d_n) \in (\mathcal{P}(\kappa) \times \omega^{\omega}) \cap HOD_{x_n}\) be the \(<_{x_n}\)-least pair such that \newline
\(x_{n+1} \in \text{dom}(S_{\infty})\) and \(d_n\) dominates \(HOD_{x_{n+1}}\). \newline

\noindent In the end, \(\forall n \in \omega \thinspace [HOD_{x_n} \supseteq HOD_{x_{n+1}}]\) and \(\forall n \in \omega \thinspace [d_n \text{ dominates } HOD_{x_{n+1}}]\). \newline
Also, for each \(m \in \omega\): \(\langle x_n, d_n : n \geq m \rangle\) is hereditarily definable from ordinals and \(x_m\), \newline
i.e. \(\langle x_n, d_n : n \geq m \rangle \in HOD_{x_m}\). This contradicts Fact \ref{Hjorth}. \hfill \qedsymbol \newline

\noindent In \(V[G]\), by Fact \ref{Shelah}, take \(x \subseteq \kappa\) so that \(\mathcal{P}(\kappa) \subseteq HOD_x\) (so we also have \(\omega^{\omega} \subseteq HOD_x\)) and \(x\) enumerates \(A \subseteq \lambda\) in its first \(\lambda\) bits. Then in any generic extension of \(V[G]\), we can recover \(a = \langle \mathcal{P}(\lvert \mathbb{P}^n \rvert)^{V[G]} : n \in \omega \rangle\) from ordinals and \(x\). So by Lemma \ref{PreservesLemma}, we have \(x \in B(\kappa)^{V[G]}\). \newline

\noindent Next, we have \(B(\kappa)^V \subseteq B(\kappa)^{V[G]}\) (because if \(V \models ``\mathbb{P}^{n + 1} \text{ preserves } HOD_y\)", then \newline
\(V[G] \models ``\mathbb{P}^n \text{ preserves } HOD_y\)") and \(\forall y \in B(\kappa)^V \thinspace [HOD_y^V = HOD_y^{V[G]}]\). So by induction, we have \(\forall \alpha \in ON \thinspace [S_{\alpha}^V \subseteq S_{\alpha}^{V[G]}]\); so by induction (since \(\mathcal{P}(\kappa)^V \subseteq HOD_x^{V[G]}\)), we have \(\forall \alpha \in ON \thinspace \forall y \in \text{dom}(S_{\alpha}^V) \thinspace [xS_{\alpha}^{V[G]}y]\). \newline

\noindent Take \(d \in \omega^{\omega} \cap V[G]\) so that \(d\) dominates \(V\). We will contradict the claim by showing that for each \(\alpha \in ON\), we have \(x \in \text{dom}(S_{\alpha}^{V[G]})\). \newline

\noindent The base case and the limit case are clear. \newline
For the successor case, let \(\alpha \in ON\), and suppose \(x \in \text{dom}(S_{\alpha}^{V[G]})\). Since \(V \equiv_{ON} V[G]\), take \(y \in \text{dom}(S_{\alpha}^V)\). Then \(xS_{\alpha}^{V[G]}y\); \(y \in \text{dom}(S_{\alpha}^{V[G]})\); \(d \in \omega^{\omega} \cap HOD_x^{V[G]}\); and \(d\) dominates \(HOD_y^{V[G]}\). So \(xS_{\alpha+1}^{V[G]}y\), so \(x \in \text{dom}(S_{\alpha+1}^{V[G]})\).
\end{proof}

The proof becomes cleaner if \(\mathbb{P}\) is (for example) Hechler forcing: since Hechler forcing is always definable and cone homogeneous, the assumption \(V \equiv_{ON} V[G]\) is only needed at the very end, when comparing \(S_{\alpha}^V\) and \(S_{\alpha}^{V[G]}\). So if \(G\) is Hechler-generic over \(V\), then \(V\) and \(V[G]\) disagree on the first \(\alpha\) satisfying \(S_{\alpha} = \emptyset\).

\section{Open problems}

First, we ask two questions about theories. Recall that in Woodin's model from the introduction, we have \(V = L[r]\) for a real \(r\), and the theory is unchanged by any forcing whose generic object is a real.

\begin{Question}
Must the theory be changed by some forcing whose generic object is a subset of \(\omega_1\)?
\end{Question}

For example, in Woodin's model, the theory is changed by \(\text{Fn}(\omega_1, 2)\) (i.e. the set of finite partial functions from \(\omega_1\) to \(2\), ordered by \(\supseteq\)), because \(\text{Fn}(\omega_1, 2)\) forces ``there is no real \(r\) such that \(V = L[r]\)".

\begin{Question}
Suppose there is no real \(r\) such that \(V = L[r]\). Must Hechler forcing change the theory?
\end{Question}

We can ask further questions by strengthening the assumption, or by replacing ``Hechler forcing" with other forcings that add dominating reals.

Finally, we ask a question about ordinal theories.

\begin{Question}
Let \(\mathbb{P}\) be a forcing that adds a dominating real. Must \(\mathbb{P}\) change the ordinal theory?
\end{Question}

A positive answer would be a stronger result than both Corollary \ref{MantleTheorem} and Theorem \ref{GenTheorem}.

\bibliography{domreferences}
\bibliographystyle{plain}

\end{document}